\newfont{\sheaf}{eusm10 scaled\magstep1}
\newtheorem{definition}{Definition}[section]
\newtheorem{proposition}{Proposition}[section]
\newtheorem{prop}{Proposition}[section]
\newtheorem{corollary}[proposition]{Corollary}
\newtheorem{lemma}[proposition]{Lemma}
\newtheorem{theorem}[proposition]{Theorem}
\newtheorem*{thm*}{Theorem}
\newtheorem*{cor*}{Corollary}
\newtheorem{remark}{Remark}[proposition]
\DeclareMathOperator{\Ker}{ker}
\DeclareMathOperator{\Supp}{Supp}
\DeclareMathOperator{\Sing}{Sing}
\DeclareMathOperator{\Ext}{Ext}
\DeclareMathOperator{\ext}{ext}
\DeclareMathOperator{\Hom}{Hom}
\DeclareMathOperator{\Pic}{Pic}
\DeclareMathOperator{\Rk}{rk}
\DeclareMathOperator{\rk}{rk}
\DeclareMathOperator{\Coker}{coker}
\newcommand{\C}{\mathbb{C}}
\newcommand{\PP}{\mathbb{P}}
\newcommand{\OO}{\mathcal{O}}
\newcommand{\mc}[1]{\mathcal{#1}}
\newcommand{\mf}[1]{\mathfrak{#1}}
\newcommand{\im}{im}
\newcommand{\U}{\mathcal{U}_C}
\newcommand{\SU}{\mathcal{SU}_C}
\newcommand{\mI}{\mathcal{I}_2}
\newcommand{\mF}{\mathcal{F}}
\newcommand{\mHom}{\mathcal{H}om}
\newcommand{\mV}{\mathcal{V}}
\newcommand{\id}{id}
\newcommand{\mM}{\mathcal{M}}
\newcommand{\ev}{ev}
\newcommand{\evv}{ev_V}
\begin{document}
\title{Genus 2 curves and generalized theta divisors}
\thanks{Both authors are partially supported by INdAM - GNSAGA. We would like to thank Alessandro Verra for useful comments on the preliminary version of this paper.}

\author{Sonia Brivio, Filippo F. Favale}
\address[Sonia Brivio]{Department of Mathematics, University of Milano-Bicocca, Via Roberto Cozzi, 55, 20125 Milano (MI)}
\email{sonia.brivio@unimib.it}
\address[Filippo F. Favale]{Department of Mathematics, University of Milano-Bicocca, Via Roberto Cozzi, 55, 20125 Milano (MI)}
\email{filippo.favale@unimib.it}

\subjclass[2010]{14H60}

\begin{abstract}
In this paper we investigate generalized theta divisors $\Theta_r$ in the moduli spaces $\U(r,r)$ of semistable vector bundles on a curve $C$ of genus $2$. We provide a desingularization $\Phi$ of $\Theta_r$ in terms of a projective bundle $\pi:\PP(\mV)\to\U(r-1,r)$ which parametrizes extensions of stable vector bundles on the base by $\OO_C$. Then, we study the composition of $\Phi$ with the well known theta map $\theta$. We prove that, when it is restricted to the general fiber of $\pi$, we obtain a linear embedding.
\end{abstract}
\maketitle

\pagestyle{plain}
\baselineskip=12pt
\section*{Introduction}

\noindent Theta divisors play a fundamental role in the study of moduli spaces of semistable vector bundles on curves. First of all, the classical notion of theta divisor of the Jacobian variety of  a curve can be generalized to higher rank.
Let $C$ be a smooth, irreducible, complex, projective curve of genus $g \geq 2$. The study of isomorphism classes of stable vector bundles  of fixed rank $r$ and degree $n$ goes back to Mumford. The compactification of this moduli space is denoted by ${\mathcal U}_C(r,n)$ and has been introduced by Seshadri. In the particular case when the degree is equal to $r(g-1)$ it admits a natural Brill-Noether locus 
${\Theta}_r$,  which is called the  {\it  theta divisor} of ${\mathcal U}_C(r,r(g-1))$. Riemann's singularity Theorem extends to $\Theta_r$, see \cite{Lz}.

\noindent When we restrict our attention to semistable vector bundles of rank $r$ and fixed determinant $L \in \Pic^{r(g-1)}(C)$, we have the moduli space $\SU(r,L)$ and a Brill-Noether locus ${\Theta}_{r,L}$ which is called the  {\it theta divisor} of ${\mathcal SU}_C(r,L)$.
The line bundle associated to ${\Theta}_{r,L}$ is the ample generator ${\mathcal L}$ of the Picard variety of $\SU(r,L)$,  which is called the {\it determinant line bundle}, see \cite{DN}.
\vspace{2mm}

\noindent For semistable vector bundles with integer slope, one can also introduce the notion of {\it associated theta divisor}. In particular for a stable $E \in \SU(r,L)$ with $L\in \Pic^{r(g-1)}(C)$ we have that the set
$$\{N\in \Pic^{0}(C) \, |\, h^0(E\otimes N)\geq 1 \}$$
is either all $\Pic^0(C)$ or an effective divisor $\Theta_E$ which is called the theta divisor of $E$. Moreover the map which associates to each bundle $E$ its theta divisor $\Theta_E$ defines a rational map
$$ \theta \colon \SU(r,L) \dashrightarrow \vert r \Theta_M \vert,$$
where $\Theta_M$ is a translate of the canonical theta divisor of $\Pic^{g-1}(C)$ and $M$ is a line bundle such that $M^{ \otimes r} = L$. Note that the indeterminacy locus of $\theta$ is given by set the vector bundles which does not admit a theta divisor.
\vspace{2mm}

\noindent Actually, this map is defined by the determinant line bundle $\mathcal L$, see \cite{BNR89} and it has been studied by many authors.
It has been completely described for $r= 2$ with the contributions of many authors. On the other hand, when $r\geq 3$, very little is known. 
In particular, the genus $2$ case seems to be interesting. First of all, in this case we have that $\dim \SU(r,L) = \dim \vert r \Theta_M \vert$. For $r=2$ it is proved in \cite{N-R} that
$\theta$ is an isomorphism, whereas, for $r=3$ it is a double covering ramified along a sextic hypersurface (see \cite{Or}). For $r\geq 4$ this is no longer a morphism, and it is generically finite and dominant, see \cite{B2} and \cite{BV}. 
\vspace{2mm}

\noindent In this paper, we will consider a smooth curve $C$ of genus $2$. In this case,  the theory of extensions of vector bundles allows us to give a birational description of the Theta divisor ${\Theta_r}$  as a projective
bundle over the moduli space ${\mathcal U}_C(r-1,r)$.
Our first result is Theorem \ref{THM:RIS1} which can be stated as follows
\begin{thm*}
There exists a vector bundle ${\mathcal V}$ on ${\mathcal U}_C(r-1,r)$ of rank $2r-1$ whose fiber at the point $[F] \in {\mathcal U}_C(r-1,r)$ is $\Ext^1(F,\OO_C)$. Let ${\mathbb P}(\mathcal V)$ be the associated projective bundle and $\pi \colon {\mathbb P}(\mathcal V) \to {\mathcal U}_C(r-1,r)$ the natural projection. Then the map
$$ \Phi \colon {\mathbb P}(\mathcal V) \to \Theta_r$$
sending $[v]$ to the vector bundle which is  extension of $\pi([v])$ by $\OO_C$, is a birational morphism. 
\end{thm*}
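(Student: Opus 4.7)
The plan has three parts: construct $\mathcal V$, define $\Phi$, and establish birationality.

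\emph{Construction of $\mathcal V$.} Since $\gcd(r-1,r)=1$, every semistable bundle of rank $r-1$ and degree $r$ is stable, and $\mathcal{U}_C(r-1,r)$ admits a universal bundle $\mathcal F$ on $C\times\mathcal{U}_C(r-1,r)$. For a stable $F$ of rank $r-1$ and degree $r$, Riemann--Roch on $C$ (of genus $2$) gives $\chi(F^\vee)=-r-(r-1)=-(2r-1)$; because $F^\vee$ is stable of negative slope it has no global sections, so $h^1(F^\vee)=\ext^1(F,\OO_C)=2r-1$, independent of $F$. Hence $\mathcal V:=R^1p_{2*}\mathcal F^\vee$ (with $p_2$ the projection to $\mathcal{U}_C(r-1,r)$) is, by Grauert's base change theorem, locally free of rank $2r-1$ with fiber $\Ext^1(F,\OO_C)$ at $[F]$.

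\emph{Definition of $\Phi$.} On $C\times\PP(\mathcal V)$ the tautological quotient $\pi^*\mathcal V\twoheadrightarrow\OO_{\PP(\mathcal V)}(1)$ gives a universal extension
$$0\to p_1^*\OO_C\otimes p_2^*\OO_{\PP(\mathcal V)}(-1)\to\mathcal E\to(\id_C\times\pi)^*\mathcal F\to 0,$$
whose restriction to the fiber over $[v]\in\PP(\mathcal V)$ is a rank-$r$ degree-$r$ bundle $E$ realizing the extension class $v$. The crucial non-trivial step is showing that $E$ is semistable for every $[v]$. I would do this by case analysis on a proper subbundle $G\subsetneq E$, distinguishing (A) $G\cap\OO_C=0$, where $G$ injects into $F$ and the stability of $F$ at slope $r/(r-1)$ combined with the integrality of $\deg G$ and the non-splitness of the extension forces $\mu(G)\leq 1$, and (B) $\OO_C\subset G$ (the only other possibility compatible with $E/G$ being torsion-free), where $G/\OO_C$ is a proper subbundle of $F$ and again $\mu(G)\leq 1$. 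Since the inclusion $\OO_C\hookrightarrow E$ is a nonzero section, $[E]\in\Theta_r$, and the moduli property of $\mathcal{U}_C(r,r)$ produces the morphism $\Phi:\PP(\mathcal V)\to\Theta_r$.

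\emph{Birationality.} The dimensions match:
$$\dim\PP(\mathcal V)=\bigl((r-1)^2+1\bigr)+(2r-2)=r^2=\dim\Theta_r.$$
Since $\PP(\mathcal V)$ is projective and irreducible, its image under $\Phi$ is a closed irreducible subset of $\Theta_r$ of dimension $r^2$; as $\Theta_r$ is irreducible, $\Phi$ is surjective. For generic injectivity I would argue that any $[v']\in\Phi^{-1}([E])$ determines a subbundle $\OO_C\subset E$ (the inclusion from the extension), so two different preimages yield two different such subbundles and hence two linearly independent sections in $H^0(E)$. By the higher-rank Riemann singularity theorem \cite{Lz}, the multiplicity of $\Theta_r$ at $[E]$ equals $h^0(E)$, so smooth (hence generic) points of $\Theta_r$ have $h^0(E)=1$; over these $\Phi$ has a single preimage, proving birationality.

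The main obstacle is the semistability check in the middle step: one must show that \emph{every} extension from $\PP(\mathcal V)$ is semistable, not merely the generic one, so that $\Phi$ is defined on all of $\PP(\mathcal V)$. This relies on a delicate combination of the stability of $F$ at the non-integer slope $r/(r-1)$, the integrality of degrees on $C$, and the non-splitness of the extension, all three being essential in the subbundle case analysis above.
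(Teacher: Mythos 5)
Your construction of $\mathcal V$, the semistability analysis needed to make $\Phi$ a morphism, the dimension count, and the use of Laszlo's result ($\mathrm{mult}_{[E]}\Theta_r=h^0(E)$) to get degree one all match the paper's proof (the paper's Lemmas 2.2--2.4 and Theorem 2.7). Your case analysis for semistability is essentially the paper's: case (A) with $\rk G\le r-2$ uses only stability of $F$; non-splitness is genuinely needed only when $\rk G=r-1$ and $G\to F$ is an isomorphism; and your reduction of case (B) to ``$\OO_C\subset G$'' is legitimate once you restrict to saturated $G$, whereas the paper allows $G\cap\OO_C=\OO_C(-A)$ and derives a numerical contradiction.

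The genuine gap is in the birationality step: you assert that the image of $\Phi$ has dimension $r^2$, but this is exactly what needs proof. A priori the generic fiber of $\Phi$ over its image could be positive-dimensional (the fiber over a stable $[E]$ is essentially $\PP(H^0(E))$, and nothing you have said rules out $h^0(E_v)\ge 2$ for \emph{every} $v$), in which case the image would be a proper subvariety of $\Theta_r$ and the map would not be dominant. Your subsequent appeal to smooth points of $\Theta_r$ having $h^0=1$ presupposes that the image meets the smooth locus, which is circular. The paper closes this by proving the converse direction first (its Lemma 2.1): for any \emph{stable} $[E]\in\Theta_r$, a nonzero section $s\in H^0(E)$ is nowhere vanishing --- since $\mu(E)=1$ and $E$ is stable, an inclusion $\OO_C(D)\hookrightarrow E$ with $D>0$ would destabilize $E$ --- and the quotient $F=E/\OO_C$ is semistable of rank $r-1$ and degree $r$, hence stable by coprimality. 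This shows $\Phi$ hits every stable point of $\Theta_r$, hence is dominant, and only then does the $h^0(E)=1$ argument give degree one. You should either supply this lemma (it is where the hypotheses $g=2$ and slope $1$ enter decisively) or, alternatively, exhibit a single finite fiber to force $\dim\Phi(\PP(\mathcal V))=r^2$ by upper semicontinuity of fiber dimension; as written, the surjectivity claim is unsupported.
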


\noindent In particular, notice that this theorem gives a desingularization of $\Theta_r$ as ${\mathbb P}(\mathcal V)$ is smooth. As a corollary of the above Theorem  we have, (see \ref{cor1}),  that ${\Theta}_{r,L}$ is birational to a projective bundle over the moduli space $\SU(r-1,L)$ for any $r\geq 3$.  
This has an interesting consequence (see Corollary \ref{cor2}):
\begin{cor*}
${\Theta}_{r,L}$ is a rational subvariety of $\SU(r,L)$. 
\end{cor*}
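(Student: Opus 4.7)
\textbf{Proof plan for Corollary \ref{cor2}.}

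The strategy is to combine the birational description provided by Corollary \ref{cor1} with the classical rationality result for moduli spaces of bundles of coprime rank and degree. First, I would invoke Corollary \ref{cor1} to replace $\Theta_{r,L}$ birationally by a projective bundle of the form $\pi_L\colon\PP(\mV_L)\to\SU(r-1,L)$, where $\mV_L$ is the restriction (or the analogous construction in fixed-determinant setting) of $\mV$, a vector bundle of rank $2r-1$. Over the open dense stable locus of $\SU(r-1,L)$, the bundle $\PP(\mV_L)$ is Zariski-locally trivial as a $\PP^{2r-2}$-bundle. Thus it is birational to $\PP^{2r-2}\times \SU(r-1,L)$, and the rationality of $\Theta_{r,L}$ reduces to the rationality of $\SU(r-1,L)$.

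Next I would check the coprimality hypothesis needed to apply the rationality theorem to $\SU(r-1,L)$. Since $g=2$ and $\deg L = r(g-1) = r$, the moduli space $\SU(r-1,L)$ parametrizes semistable bundles of rank $r-1$ and determinant of degree $r$. As $\gcd(r-1,r)=1$, every semistable bundle with such invariants is stable, so $\SU(r-1,L)$ is smooth and projective.

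Then I would invoke the King--Schofield rationality theorem (or the earlier Newstead--Ramanan type results for low rank), which asserts that the moduli space of stable vector bundles of coprime rank and degree on a smooth projective curve of genus $\geq 2$ is rational. Applied here, this gives that $\SU(r-1,L)$ is rational for every $r\geq 3$; in particular for $r=3$ the base case reduces to Narasimhan--Ramanan's description of $\SU(2,L)$ as $\PP^3$ on a genus-two curve. Combining with the projective bundle reduction above, $\Theta_{r,L}$ is birational to $\PP^{2r-2}\times \SU(r-1,L)$, and hence rational.

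The conceptual work is entirely done by Corollary \ref{cor1}; the only real ingredient beyond that is the appeal to King--Schofield. The minor technical point to verify is that the projective bundle $\PP(\mV_L)$ is indeed Zariski-locally trivial on a dense open subset of the (fine) moduli $\SU(r-1,L)$ so that its projectivization is birational to $\PP^{2r-2}\times \SU(r-1,L)$, but in the coprime case a universal family exists (at least \'etale-locally, and the Brauer obstruction disappears after passing to a projectivization), so this is straightforward.
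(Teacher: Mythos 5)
Your proof is correct and follows essentially the same route as the paper: combine Corollary \ref{cor1} (birationality of $\Theta_{r,L}$ with the projective bundle $\PP(\mV_L)$ over $\SU(r-1,L)$, which is Zariski-locally trivial since $\mV_L$ is an honest vector bundle) with the rationality of $\SU(r-1,L)$ for coprime rank $r-1$ and degree $r$, via Newstead and King--Schofield. One small slip in a parenthetical remark: for $r=3$ one needs $\SU(2,L)$ with $\deg L=3$ odd on a genus-$2$ curve, which is the (rational) intersection of two quadrics in $\PP^5$, not $\PP^3$ (the latter is the trivial-determinant case); this does not affect the argument.
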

\noindent The proof of the Theorem and its corollaries can be found in Section \ref{SEC:2}.
\vspace{2mm}

\noindent 
The second result of this paper is contained in Section \ref{SEC:3} and it involves the study of the restriction of $\Phi$ to the general fiber $\PP_F=\pi^{-1}([F])$ of $\pi$ and its composition with the theta map. 
The main result of this section is Theorem \ref{THM:MAIN2} which can be stated as follows:
\begin{thm*}
For a general stable bundle $F \in \SU(r-1,L)$  the map
$$ \theta \circ \Phi|_{\PP_F} \colon \PP_F \to \vert r \Theta_M \vert $$
is a linear embedding.
\end{thm*}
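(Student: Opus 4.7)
\emph{Overall strategy and linearity.} The plan is to realize $\theta \circ \Phi|_{\PP_F}$ as the projectivization of an explicit linear map $L\colon V := \Ext^1(F,\OO_C) \to H^0(\Pic^{g-1}(C), \OO(r\Theta_M))$, which makes the linearity part automatic, and then to prove that $L$ is injective. To construct $L$, I would work on $\PP_F \times C \times \Pic^{g-1}$, with projections $q_1, q_2, q_{12}, q_{13}, q_{23}$, pull back the universal extension
\[
0 \to q_1^*\OO_{\PP_F}(1) \to \mc{E} \to q_2^* F \to 0
\]
tensored with (an appropriate twist of) the pulled-back Poincar\'e bundle $q_{23}^* \mc{P}$, and push down by $q_{13}$ to get a perfect complex of virtual rank $0$ on $\PP_F \times \Pic^{g-1}$, since $\chi(E_v \otimes M^{-1} \otimes N)=0$. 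Its determinant line bundle carries the Knudsen--Mumford section $\sigma$, vanishing on the universal theta locus. Splitting the determinant along the triangle coming from the universal extension and invoking the classical identification $\det R\pi_{2*}\mc{P} \cong \OO(-\Theta)$ together with its rank-$r$ analogue identifies the determinant with $\OO_{\PP_F}(1) \boxtimes \OO(r\Theta_M)$. By K\"unneth and the vanishing $H^i(\PP_F, \OO(1)) = 0$ for $i > 0$, $\sigma$ then corresponds to the desired $L\colon V \to H^0(\Pic^{g-1}, \OO(r\Theta_M))$, whose fiberwise value is the theta section of $E_v$; so $\theta \circ \Phi|_{\PP_F} = \PP(L)$.

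\emph{Reduction of injectivity to a Serre-duality pairing.} To prove $L$ injective I would translate $L(v)=0$ into a cohomological statement. For generic $N \in \Pic^{g-1}$, the long exact sequence of the twisted extension shows that $L(v)(N)=0$ iff the coboundary $\delta^N_v\colon H^0(F \otimes M^{-1} \otimes N) \to H^1(M^{-1} \otimes N)$ vanishes; since $\delta^N_v$ is cup product with $v$ and, by Serre duality on $C$, $H^1(M^{-1} \otimes N)^\vee \cong H^0(\omega_C \otimes M \otimes N^{-1})$, this amounts to the vanishing of the Serre pairing $H^1(F^\vee) \times H^0(F \otimes \omega_C) \to \C$ of $v$ against the decomposable section $s_N \cdot \alpha_N \in H^0(F \otimes \omega_C)$, where $s_N$ and $\alpha_N$ generate the two one-dimensional spaces. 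Re-parametrizing $N$ via the Abel--Jacobi map by $D_N := Z(\alpha_N) \in C^{(g)}$, one recognizes $s_N \cdot \alpha_N$ as (up to scalar) the unique section of $F \otimes \omega_C$ vanishing on $D_N$. Hence $L$ is injective if and only if the sections of $F \otimes \omega_C$ vanishing on effective degree-$g$ divisors span all of $H^0(F \otimes \omega_C)$.

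\emph{Main obstacle.} This span statement is the heart of the proof and where the genericity of $F$ is genuinely needed. Geometrically it is equivalent to the assertion that no $v \neq 0$ places $E_v$ inside the indeterminacy locus of $\theta$: a nonzero $v \in \ker L$ would force every generic sub-line-bundle $N^{-1} \hookrightarrow F$ to lift to a sub-line-bundle $N^{-1} \hookrightarrow E_v$, endowing $E_v$ with a $g$-dimensional family of degree-$0$ line subsheaves. For $F$ a general stable bundle in $\SU(r-1,L)$ on a genus-$2$ curve, I would rule this out via known bounds of Lange-type on the dimension of families of maximal sub-line-bundles of stable bundles on $C$: such a $g$-dimensional family is inconsistent with stability of $E_v$ plus generality of $F$. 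The delicate point — and the expected main technical obstacle — is making this conclusion hold uniformly for \emph{every} nonzero $v$, not merely for a generic $v$, since injectivity of $L$ requires $\ker L=0$. Once this is achieved, $\PP(L)$ is the claimed linear embedding $\PP_F \hookrightarrow |r\Theta_M|$.
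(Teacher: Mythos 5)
Your overall architecture is essentially the right one, and your reduction is correct: linearity comes down to identifying the pullback of the hyperplane bundle of $|r\Theta_M|$ with $\OO_{\PP_F}(1)$ (the paper does this by exhibiting $\Phi_F^*(D_\xi)$ as the hyperplane $H_\xi=\PP(\ker H^1(\iota_s^*))$ via the lifting lemma, rather than via determinants of cohomology, but either route can be made to work), and injectivity reduces, exactly as you say, to the statement that the one-dimensional subspaces $H^0(F\otimes\omega_C(-d))$, $d\in C^{(2)}$, span $H^0(F\otimes\omega_C)$ --- equivalently, that no nonzero $v$ places $E_v$ in the indeterminacy locus of $\theta$. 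This is precisely the content of the paper's proposition that every nontrivial extension of a suitable $F$ by $\OO_C$ admits a theta divisor.

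The genuine gap is in your proposed proof of that span statement, and you flag it yourself. Lange-type dimension bounds for families of line subbundles are statements about general stable bundles, whereas the bundles $E_v$ are special by construction (extensions of $F$ by $\OO_C$, hence points of $\Theta_r$), and they vary over all of $\PP_F$, so no genericity in $v$ is available. The expected dimension of the family of degree-$0$ line subsheaves of a rank-$r$, slope-$1$ stable bundle on a genus-$2$ curve is $1$, so what you must exclude is an excess of exactly one in the relevant Quot scheme of $E_v$, for every nonzero $v$; stability of $E_v$ plus generality of $F$ does not yield this by any standard bound, and nothing in your sketch supplies the missing argument. The paper closes this gap with a different and essential ingredient: if every $H^0(F\otimes\omega_C(-d))$ lay inside the hyperplane $V=\ker(\cdot\cup v)\subset H^0(F\otimes\omega_C)$, then restricting the evaluation map of the secant bundle $\mF_2(F\otimes\omega_C)$ to $V\otimes\OO_{C^{(2)}}$ produces a rank $2r-3$ subsheaf of the rank $2r-2$ bundle $\mF_2(F\otimes\omega_C)$ whose slope with respect to the ample class $x$ is $\tfrac{2r-1}{2r-3}$ (or $\tfrac{2r-2}{2r-3}$ in the case $h^0(F)=2$), strictly larger than $\mu_x(\mF_2(F\otimes\omega_C))=\tfrac{2r-1}{2r-2}$, contradicting the semistability of secant bundles on $C^{(2)}$ due to Basu--Dan. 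Without this (or an equivalent) input your injectivity step does not close, so the proposal as written does not constitute a proof.
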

\noindent In the proof we are actually more precise about the generality of $F$: we describe explicitely a open subset of the moduli space $\SU(r-1,L)$ where the above theorem holds. Let us stress that one of the key argument in the proof involves the very recent result about the stability of secant bundles $\mathcal{F}_2(E)$ over the two-symmetric product of a curve, see \cite{BD}.
\vspace{2mm}

\noindent It would be interesting to extend the above results to a curve of genus $g\geq 3$, but the generalization is not straightforward as one can think. First of all, in order to have  a  projective bundle    over the moduli space ${\mathcal U}_C(r-1,r(g-1))$, as in theorem \ref{THM:RIS1},  we need to assume  that $r-1$ and $r(g-1)$ are coprime. Nevertheless, also in these hypothesis $\Phi$ is no more a morphism (see Remark \ref{REM:NEEDG2} for more details). Finally, in order to generalize the second result, one need to consider secant bundles over $g$-symmetric product of a curve. Unfortunately, in this case, it is not known whether the secant bundle $\mathcal{F}_g(E)$ is stable when $E$ is so, and this is one of the key argument of our proof in the case $g=2$.




\section{Background and known results}
\label{SEC:1}
\noindent In this section we recall some definitions and useful results about generalized Theta divisors, secant bundles and $2$-symmetric product of curves that we will use in the following sections.


\subsection{Theta divisors}
\label{SUBSEC:1}
$\,$\vspace{1mm}

\noindent Let $C$ be a smooth, irreducible, complex, projective curve of genus $ g = 2$.
For any  $r \geq 2$ and for any $n \in {\mathbb Z}$, let ${\mathcal U}_C(r,n)$ denote  the moduli space of semistable vector bundles on the curve $C$ 
with rank $r$ and degree $n$. It is a normal, irreducible,  projective variety  of dimension $r^2+1$, 
whose points are $S$-equivalence classes of  semistable vector bundles of rank $r$ and degree $n$; we recall that   two vector bundles   are called  to be  $S$-equivalent if  they have isomorphic  graduates, where   the graduate  $gr(E)$ of $E$ is the polystable bundle defined by a Jordan-Holder filtration of $E$, see \cite{Ses82} and \cite{LP}.  We denote by $\mathcal{U}_C(r,n)^s$  the open subset correponding to isomorphism classes of stable bundles. For $r=2$ one has that $\mathcal{U}_C(r,n)$ is smooth, whereas, for $r\geq 3$ one has
$$\Sing (\mathcal{U}_C(r,n))  = \mathcal{U}_C(r,n) \setminus \mathcal{U}_C(r,n)^s.$$  
Moreover, $\mathcal{U}_C(r,n) \simeq {\mathcal U}_C(r,n')$ whenever $n'-n= kr$, with $k \in {\mathbb Z}$, and $\mathcal{U}_C(r,n)$ is  a fine moduli space if and only if $r$ and $n$ are coprime. 
\vspace{2mm}

\noindent For any line bundle $L \in \Pic^{n}(C)$, let ${\mathcal SU}_C(r,L)$ denote the moduli space of semistable vector bundles on $C$ with rank $r$ and fixed determinant $L$. 
These moduli spaces  are  the fibres of the natural map ${\mathcal U}_C(r,n) \to \Pic^{n}(C)$ which associates to each vector bundle its determinant. 
\vspace{2mm}

\noindent When $n = r$, we consider the following Brill-Noether loci:
$${\Theta}_r  = \{  [E] \in {\mathcal U}_C(r,r) \  \vert \  h^0(gr(E)) \geq 1  \ \},$$
$${\Theta}_{r,L}  = \{  [E] \in {\mathcal SU}_C(r,L) \  \vert \  h^0(gr(E)) \geq 1  \ \},$$
where $[E]$ denotes $S-$equivalence class of $E$. 
Actually, ${\Theta}_r$ (resp. ${\Theta}_{r,L}$)  is an integral Cartier divisor which is called the {\it theta divisor } of ${\mathcal U}_C(r,r)$ (resp. ${\mathcal SU}(r,L)$), see \cite{DN}. The  line bundle $\mc{L}$ associated to ${\Theta}_{r,L}$ is called the {\em determinant bundle} of ${\mathcal SU}_C(r,L)$ and it is the generator of its Picard variety.
We denote by ${\Theta_r}^s \subset \Theta_r$ the open  
subset of stable points. Let  $[E] \in {\Theta_r}^s$, then  the multiplicity of $\Theta_r$  at the point $[E]$ is $h^0(E)$, see \cite{Lz}. This implies:
$$\Sing(\Theta_r^s)=\{[E]\in \Theta_r^s \,|\, h^0(E)\geq 2\}.$$

\noindent For semistable vector bundles with integer slope we can introduce the notion of theta divisors as follows.
Let $E$ be  a semistable vector bundle  on $C$ with integer slope $m = \frac{\deg E}{r}$.
\vspace{2mm}

\noindent The tensor product defines a morphism
$$\mu \colon {\mathcal U}_C(r,rm) \times \Pic^{1-m}(C) \to {\mathcal U}_C(r,r)$$
sending $([E], N) \to [E \otimes N]$.    
\vspace{2mm}

\noindent The intersection 
${\mu}^*{\Theta}_r \cdot ([E] \times \Pic^{1-m}(C))$ is either 
an effective divisor 
$\Theta_E$ on $Pic^{1-m}(C)$ which is called the {\it theta divisor } of $E$, or all $([E] \times \Pic^{1-m}(C))$, and in this case we will say that $E$ {\it does not admit theta divisor}. For more details see \cite{B}.
\vspace{2mm}

\noindent Set theoretically we have
$$ \Theta_E = \{ N \in \Pic^{1-m}(C)  \  \vert \  h^0(gr(E )\otimes N)  \geq 1  \ \}.$$
For all $L\in \Pic^{rm}(C)$ fixed we can choose a line bundle $M \in \Pic^{m}(C)$ such that $L= M^{\otimes r}$.
If $[E] \in \SU(r,L)$, then $\Theta_E \in \vert r \Theta_M \vert$ where 
$$\Theta_M  = \{ N \in \Pic^{1-m}(C)  \  \vert h^0(M \otimes N ) \geq 1 \   \}$$
is a translate of the canonical theta divisor $\Theta \subset \Pic^{g-1}(C)$. 
This defines a rational map, which is called the  {\em theta map} of $\SU(r,L)$
\begin{equation}
\label{EQ:THETAMAP}
\xymatrix{
\SU(r,L)\ar@{-->}[r]^-{\theta} & |r\Theta_M|.
}
\end{equation}
As previously recalled $\theta$ is the map induced by the determinant bundle $\mc{L}$ and the points $[E]$ which do not admit theta divisor give the indeterminacy locus of $\theta$. Moreover $\theta$ is an isomsorphism for $r = 2$, it is a double covering ramified along a sextic hypersurface for $r=3$. For $r \geq 4$  it is no longer a morphism: it is generically finite and dominant. 
\hfill\par


\subsection{$2$-symmetric product of curves}
\label{SUBSEC:2}
$\,$\vspace{1mm}

\noindent Let $C^{(2)}$
denote the $2$-symmetric product of $C$,
parametrizing effective divisors $d$ of degree $2$ on the 
curve $C$.
It is well known that $C^{(2)}$  is a smooth projective surface, see  \cite{A-C-G-H}. 
It is the quotient of the product $C \times C$ by the action of the symmetric group ${\mathcal S}_2$; we denote by 
$$\pi \colon C \times C \to C^{(2)}, \quad \pi(x,y) = x+y,$$ 
the quotient map, which is  a double covering  of $C^{(2)}$, ramified along the diagonal $\Delta \subset C \times C$.
\vspace{2mm}

\noindent Let $N^1(C^{(2)})_{\mathbb Z}$ be the Neron-Severi group of $C^{(2)}$, i.e. the quotient group of numerical equivalence classes of divisors on 
$C^{(2)}$.  
For any $p \in C$, let 's consider the embedding
$$i_p \colon C \to C^{(2)}$$ 
sending $q \to q+p$, we denote the image by $C + p$ and 
we denote by $x$  its numerical class in  $N^1(C^{(2)})_{\mathbb Z}$.
Let $d_2$ be the diagonal map  
$$d_2 \colon C \to C^{(2)}$$
sending $q \to 2q$. Then  $d_2 (C) = \pi(\Delta) \simeq C$,    we denote by  $\delta $ its   
numerical class in  $N^1(C^{(2)})_{\mathbb Z}$.
Finally, let's consider the Abel map 
$$A \colon C^{(2)} \to \Pic^2(C)\simeq J(C)$$
sending $p+q \to O_C(p+q)$.   Since $g(C) = 2$, it is well known that actually
$C^{(2)}$ is the blow up  of $\Pic^2(C)$ at $\omega_C$ with exeptional divisor 
$$\mf{E} = \{ d \in C^{(2)} \vert  \  \OO_C(d) \simeq {\omega}_C \} \simeq {\mathbb P}^1.$$
This implies that: 
$${K}_{C^{(2)}} =  A^* (K_{\Pic^2(C)}) + \mf{E} = \mf{E},$$ 
since $K_{\Pic^2(C)}$  is trivial.
 \hfill\par\noindent
Let $\Theta \subset J(C)$ be the theta divisor, its pull back $A^*(\Theta)$ is an effective divisor on 
$C^{(2)}$, we denote by   $\theta $ its  numerical class in $N^1(C^{(2)})_{\mathbb Z}$.   
It is well known that $\delta = 2( 3x - \theta)$, or, equivalently,
\begin{equation}
\theta=3x-\frac{\delta}{2}.
\end{equation}
If $C$ is a general curve of genus $2$ then $N^1(C^{(2)})_{\mathbb Z}$ is generated by the classes ${x}$ and $\frac{\delta}{2}$ (see \cite{A-C-G-H}). The Neron-Severi lattice is identified by the relations
$$x\cdot x = 1, \quad x\cdot  \frac{\delta}{2} = 1, \quad \frac{\delta}{2} \cdot \frac{\delta}{2} = -1.$$



\subsection{Secant bundles on $2$-symmetric product of curves}
\label{SUBSEC:3}
$\,$\vspace{1mm}

\noindent Let's consider the {\it universal effective divisor} of degree $2$ of $C$:
$$ {\mathcal I}_2 = \{ (d,y) \in C^{(2)} \times C \ \vert \ y \in \Supp(d) \},$$
it is a smooth irreducible divisor on $ C^{(2)} \times C$. Let $\iota$ be the embedding of $\mI$ in $C^{(2)}\times C$, $r_1$ and $r_2$ be the natural projections of $ C^{(2)} \times C$ onto factors and $q_i=r_i\circ \iota$ the restriction to $\mI$ of $r_i$. Then $q_1$ is a surjective map of degree $2$. Denote also with $p_1$ and $p_2$  the natural projections of $C \times C$ onto factors.
\vspace{2mm}

\noindent We have a natural  isomorphism 
$$ \nu \colon C \times C \to {\mathcal I}_2, \quad (x,y) \to (x+y, y)$$
and, under this isomorphism,  the map $q_1 \colon \mI \to C^{(2)}$ can be identified with the map $\pi \colon C \times C \to C^{(2)}$. It is also easy to see that the map $q_2$, under the isomorphism $\nu$, can be identified with the projection $p_2$. We have then a commutative diagram

$$\xymatrix{
& C^{(2)}\times C\ar[ld]_-{r_1}\ar[rd]^-{r_2} &\\
C^{(2)} & \mI \ar[u]_-{\iota}\ar[l]^-{q_1} \ar[r]^-{q_2} & C\\
& C\times C \ar[ul]^-{\pi} \ar[ur]_-{p_2} \ar[u]^-{\nu}
}$$

\hfill\par\noindent
Now we will introduce the secant bundle $\mF_2(E)$ associated to a vector bundle $E$ on $C$ as well as some properties which will be useful in the sequel. For an introduction on these topics one can refer to  \cite{Sch64} or the Ph.D. thesis of E. Mistretta, whereas some interesting recent results can be found in \cite{BN12} and \cite{BD}. 
\vspace{2mm}

\noindent Let $E$ be a vector bundle of rank $r$ on $C$, we can associate to $E$ a sheaf on $C^{(2)}$ which is defined as 
\begin{equation}
\mc{F}_2(E) = q_{1*}({q_2^*(E)}).
\end{equation}
 $\mc{F}_2(E)$ is  a vector bundles of rank $2r$  which is called the {\it secant bundle associated to $E$} on $C^{(2)}$.
\vspace{2mm}

\noindent Let's consider the pull back of the secant bundle on $C \times C$: $ {\pi}^* {\mathcal F}_2(E)$. Outside the diagonal $\Delta \subset C \times C$ we have: 
$$ {\pi}^* {\mathcal F}_2(E)  \simeq p_1^*E \oplus p_2^*(E).$$  Actually, these bundles are related by the  following  exact sequence:
\begin{equation} 
0 \to {\pi}^* {\mathcal F}_2(E) \to p_1^*E \oplus p_2^*(E) \to  p_1^*(E)_{\vert \Delta} = p_2^*(E)_{\vert \Delta} \simeq E \to 0,\end{equation}
where the last map sends $(u,v) \to u_{\vert \Delta}-v_{\vert \Delta}$.


\hfill\par\noindent
Finally, from the exact sequence on $C^{(2)} \times C$:
$$0 \to \OO_{C^{(2)} \times C}(-\mI) \to \OO_{C^{(2)} \times C} \to \iota_*\OO_{\mI} \to 0,$$
tensoring with $r_2^*(E)$ we get:
$$0 \to r_2^*(E) (- \mI) \to r_2^*(E)  \to \iota_*(q_2^*E) \to 0,$$
where, to simplify notations, we set $r_2^*(E)  \otimes \OO_{C^{(2)} \times C}(- \mI ) = r_2^*(E) (- \mI )$ and we have used the projection formula
$$r_2^*(E)\otimes \iota_*\OO_{\mI}=\iota_*(\iota^*( r_2^*E)\otimes \OO_{\mI})=\iota_*(q_2^*E).$$

\noindent By applying  $r_{1*}$ we get
\begin{multline}
\label{EXSEQ:SECBUN1}
0 \to r_{1*}(r_2^*(E) (- \mI)) \to H^0(E) \otimes  O_{C^{(2)}}  \to \mF_2(E) \to\\
\to R^1 r_{1*}(r_2^*(E)(-\mI)) \to H^1(E) \otimes O_{C^{(2)}} \to \cdots 
\end{multline}
since  we have:
  $r_{1*}(\iota_*(q_2^*E))=q_{1*}q_2^*E=\mF_2(E)$ and 
$$R^p r_{1*}r_2^*E = H^p(E)\otimes \OO_{\C^{(2)}}.$$
Moreover, by projection formula $H^0(C^{(2)},{\mathcal F}_2(E)) \simeq H^0(C, E)$ and the map $$H^0(E) \otimes O_{C^{(2)}}  \to {\mathcal F}_2(E)$$ appearing in (\ref{EXSEQ:SECBUN1}) is actually the evaluation map of global sections of the secant bundle; we will denoted it by $ev$.
Notice that, if we have $h^1(E)=0$, the exact sequence (\ref{EXSEQ:SECBUN1}) becomes
\begin{equation}
\label{EXSEQ:SECBUN2}
\xymatrix@C=10pt{
0\ar[r] & r_{1*}(r_2^*(E) (- \mI))\ar[r] &  H^0(E)\otimes \OO_{C^{(2)}}\ar[r]^-{ev} & \mF_2(E)\ar[r] & R^1 r_{1*}(r_2^*(E)(-\mI))\ar[r] & 0
}
\end{equation}
We will call the exact sequence (\ref{EXSEQ:SECBUN1}) (and its particular case (\ref{EXSEQ:SECBUN2})) the {\it exact sequence induced by the evaluation map of the secant bundle}. If $deg E = n$, then the Chern character of ${\mathcal F}_2(E)$ is given by the following formula:
$$ ch ({\mathcal F}_2(E)) = n(1- e^{-x}) - r + r(3+\theta) e^{-x},$$
where $x$ and $\theta$ are the numerical classes defined above. 
From this we can deduce the  Chern classes of  ${\mathcal F}_2(E)$:
\begin{equation}
\label{chern1}
 c_1({\mathcal F}_2(E)) = (n-3r)x + r\theta,
\end{equation}
\begin{equation}
\label{chern2}
c_2({\mathcal F}_2(E))= \frac{1}{2} (n-3r)(n+r+1) +r^2 + 2r.
\end{equation}

\noindent We recall the following definition:
\begin{definition}
Let $X$ be  a smooth, irreducible, complex  projective surface and let $H$ be an ample divisor on $X$.
For a torsion free sheaf $E$ on $X$  we define the slope of $E$ with respect to $H$:
$$ \mu_H(E) = \frac{c_1(E) \cdot H}{rk(E)}.$$
$E$ is said semistable with respect to $H$ if for any non zero proper subsheaf $F$ of $E$ we have $\mu_H(F) \leq \mu_H(E)$, 
it is said stable with respect to $H$ if for any proper subsheaf $F$ with $0 < rk(F) < rk(E)$ we have $\mu_H(F) < \mu_H(E)$.
\end{definition}

\noindent One of the key arguments of the proof of our main theorems will use the following interesting result which can be found in \cite{BD}:  
\hfill\par
\begin{proposition} 
\label{PROP:STABSECB}
Let $E$ be  a semistable vector  bundle  on $C$ with rank $r$ and $deg (E) \geq  r$,  then ${\mathcal F}_2(E)$ is semistable with respect to 
the ample class $x $; if $deg(E) > r$ and $E$ is stable, then ${\mathcal F}_2(E)$ is stable too with respect to the ample class $x$.
\end{proposition}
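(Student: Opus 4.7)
The plan is to reduce the question on $C^{(2)}$ to a question on $C\times C$ via the double cover $\pi:C\times C\to C^{(2)}$, where the structure of $\mathcal{F}_2(E)$ is transparent in terms of pullbacks from $C$. First, a slope computation: from $c_1(\mathcal{F}_2(E))=(n-3r)x+r\theta$ and the Neron-Severi relations $x^2=1$, $\theta\cdot x=3x^2-x\cdot\delta/2=2$, one finds $c_1(\mathcal{F}_2(E))\cdot x=n-r$, hence $\mu_x(\mathcal{F}_2(E))=(n-r)/(2r)$.

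Any destabilizing subsheaf $\mathcal{G}\subset\mathcal{F}_2(E)$ pulls back to an $S_2$-equivariant subsheaf $\pi^*\mathcal{G}\subset\pi^*\mathcal{F}_2(E)$ on $C\times C$, and, since $\pi$ is flat of degree $2$ with $\pi^*x=f_1+f_2$ (where $f_i$ is the class of a fiber of the projection $p_i:C\times C\to C$), its slope with respect to $f_1+f_2$ equals $2\mu_x(\mathcal{G})$. Using the exact sequence $0\to\pi^*\mathcal{F}_2(E)\to p_1^*E\oplus p_2^*E\to E|_\Delta\to 0$ appearing in the paper, one derives two symmetric short exact sequences
\[
0\to p_2^*E(-\Delta)\to\pi^*\mathcal{F}_2(E)\to p_1^*E\to 0,\qquad 0\to p_1^*E(-\Delta)\to\pi^*\mathcal{F}_2(E)\to p_2^*E\to 0,
\]
interchanged by the involution swapping the factors of $C\times C$. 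A direct restriction-to-fibers argument, using semistability of $E$, shows that each $p_i^*E$ (and hence each $p_i^*E(-\Delta)$) is semistable with respect to $f_1+f_2$: a subsheaf of $p_i^*E$ restricts to a subsheaf of a trivial bundle along one family of fibers and to a subsheaf of $E$ along the other, giving combined slope at most $n/r$.

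Intersecting $\pi^*\mathcal{G}$ with $p_2^*E(-\Delta)$ yields an exact sequence $0\to\mathcal{G}'\to\pi^*\mathcal{G}\to\mathcal{G}''\to 0$ with $\mathcal{G}'\subset p_2^*E(-\Delta)$ (slope at most $(n-2r)/r$) and $\mathcal{G}''\subset p_1^*E$ (slope at most $n/r$). Doing the same with the swapped sequence and exploiting the $S_2$-equivariance of $\pi^*\mathcal{G}$ to match ranks of the analogous pieces on both sides, the two sets of estimates combine to force $\mu_{f_1+f_2}(\pi^*\mathcal{G})\leq(n-r)/r$, i.e.\ $\mu_x(\mathcal{G})\leq\mu_x(\mathcal{F}_2(E))$, with strict inequality when $E$ is stable and $n>r$.

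The hardest step is this final slope bookkeeping. A naive estimate using only one exact sequence yields $\mu\leq n/r$, which is strictly larger than the target $(n-r)/r$; bridging the gap requires using the $S_2$-symmetry essentially --- it is this which encodes the fact that $\mathcal{G}$ genuinely descends to $C^{(2)}$ --- and treating separately the degenerate cases in which $\pi^*\mathcal{G}$ sits entirely inside one of the sub or quotient pieces of the filtration. The hypothesis $n\geq r$ (resp.\ $n>r$) is exactly what prevents such degenerations from destabilizing $\mathcal{F}_2(E)$.
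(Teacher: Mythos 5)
First, a point of order: the paper does not prove this proposition --- it is quoted from Basu--Dan \cite{BD} --- so your attempt cannot be compared with an argument in the text and must be judged on its own. Your setup is sound: the slope computation $\mu_x(\mathcal{F}_2(E))=(n-r)/2r$, the passage to $C\times C$, the two exchanged exact sequences with sub $p_{3-i}^*E(-\Delta)$ and quotient $p_i^*E$, and the $H$-semistability (for $H=f_1+f_2$) of $p_i^*E$ and $p_i^*E(-\Delta)$ with slopes $n/r$ and $(n-2r)/r$ are all correct. The gap is exactly where you flag it, in the ``final slope bookkeeping'', and the combination you propose cannot close it. Writing $g'=\rk\bigl(\pi^*\mathcal{G}\cap p_2^*E(-\Delta)\bigr)$ and $g''=g-g'$, your first sequence gives $\deg_H(\pi^*\mathcal{G})\leq g'\tfrac{n-2r}{r}+g''\tfrac{n}{r}=g\tfrac{n}{r}-2g'$, and the target $g\tfrac{n-r}{r}$ would require $2g'\geq g$. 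But the involution $\sigma$ sends one kernel to the other, so the two kernels $\pi^*\mathcal{G}\cap p_1^*E(-\Delta)$ and $\pi^*\mathcal{G}\cap p_2^*E(-\Delta)$ have the \emph{same} rank $g'$, and since $p_1^*E(-\Delta)\cap p_2^*E(-\Delta)=0$ their direct sum embeds in $\pi^*\mathcal{G}$: hence $2g'\leq g$, the opposite of what you need. By the same equivariance the ``swapped'' estimate is literally identical to the first, so there is nothing new to combine with. This is not a borderline failure: for the saturation of the image of $H^0(E)\otimes\OO_{C^{(2)}}\to\mathcal{F}_2(E)$ one has $g'=0$, and your bound degenerates to $g\,n/r$.

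The missing idea is to split $\deg_H(\pi^*\mathcal{G})=c_1(\pi^*\mathcal{G})\cdot f_1+c_1(\pi^*\mathcal{G})\cdot f_2$ and to use a \emph{different} one of your two filtrations on each fiber class, namely the one whose quotient piece restricts to a trivial bundle there. On a general fiber $f_2=C\times\{p\}$ the sequence $0\to p_1^*E(-\Delta)\to\pi^*\mathcal{F}_2(E)\to p_2^*E\to 0$ restricts to $0\to E(-p)\to\pi^*\mathcal{F}_2(E)|_{f_2}\to E_p\otimes\OO_C\to 0$; intersecting $\pi^*\mathcal{G}|_{f_2}$ with the sub gives a rank-$g'$ subsheaf of the semistable bundle $E(-p)$ (degree $\leq g'\tfrac{n-r}{r}$) and a quotient inside a trivial bundle (degree $\leq 0$), so $c_1(\pi^*\mathcal{G})\cdot f_2\leq g'\tfrac{n-r}{r}$; symmetrically $c_1(\pi^*\mathcal{G})\cdot f_1\leq g'\tfrac{n-r}{r}$. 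Now the inequality $2g'\leq g$ works \emph{for} you: $\deg_H(\pi^*\mathcal{G})\leq 2g'\tfrac{n-r}{r}\leq g\tfrac{n-r}{r}$, which is the required semistability, and strictness when $E$ is stable and $n>r$ follows since either $g'=0$ (bound $\leq 0<g\tfrac{n-r}{r}$) or the rank-$g'$ piece is a proper subsheaf of the stable bundle $E(-p)$ (as $g'\leq g/2<r$). In short: your global estimate applies one filtration to both fiber directions and loses a term of $g-2g'$; the repair is fiberwise and uses both filtrations, one per direction. The degenerate cases you mention are subsumed by this and do not need separate treatment.
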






\section{Description of $\Theta_r$ and $\Theta_{r,L}$.}
\label{SEC:2}
\noindent In this section we will give  a description of $\Theta_r$ (resp. $\Theta_{r,L}$) which gives a natural desingularization. Fix $r\geq 3$.

\begin{lemma}
\label{LEM:FROMETOF}
Let $E$ be a stable vector bundle with $[E] \in \Theta_r$,  then there exists a vector bundle $F$  such that $E$ fit into  the following exact sequence:
\begin{equation*}
 0 \to \OO_C \to E \to F \to 0,
 \end{equation*}
with  $[F] \in {\mathcal U}_C(r-1,r)$. 
\end{lemma}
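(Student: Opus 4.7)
The plan is to produce the desired exact sequence from a nonzero global section of $E$, and then to check by a slope comparison that the quotient is semistable of the prescribed rank and degree. Since $[E]\in\Theta_r$ and $E$ is stable, by the characterization recalled in Subsection \ref{SUBSEC:1} one has $h^0(E)=h^0(gr(E))\geq 1$, so there exists a nonzero section $s\colon\OO_C\to E$.

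The first step is to show that $s$ is nowhere vanishing, so that its image is a subbundle isomorphic to $\OO_C$ and the quotient $F=E/\OO_C$ is locally free of rank $r-1$ and degree $r$. For this I would take the saturation $L\subset E$ of the image of $s$: this $L$ is a line subbundle of $E$, and the inclusion $\OO_C\hookrightarrow L$ forces $L\simeq\OO_C(D)$ for some effective divisor $D$ of degree $d\geq 0$. Stability of $E$ gives $\mu(L)=d<\mu(E)=\tfrac{r}{r}=1$, hence $d\leq 0$, so $d=0$ and $L=\OO_C$. Thus $s$ is injective as a morphism of sheaves with locally free cokernel, producing the short exact sequence $0\to\OO_C\to E\to F\to 0$ with $F$ a vector bundle of rank $r-1$ and degree $r$.

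The second step is to verify that $F$ is semistable, which is exactly what is needed for $[F]\in\mathcal{U}_C(r-1,r)$. I would argue by contradiction: if $F'\subsetneq F$ is a subsheaf with $\mu(F')>\mu(F)=\tfrac{r}{r-1}$, take its preimage $E'\subset E$ under the surjection $E\to F$. Then $E'$ sits in $0\to\OO_C\to E'\to F'\to 0$, so $\rk(E')=\rk(F')+1$ and $\deg(E')=\deg(F')$. Stability of $E$ yields
\[
\frac{\deg(F')}{\rk(F')+1}=\mu(E')<\mu(E)=1,
\]
hence $\deg(F')\leq \rk(F')$. Combined with $\deg(F')>\tfrac{r}{r-1}\rk(F')$, this gives $\rk(F')>\tfrac{r}{r-1}\rk(F')$, i.e.\ $r-1>r$, a contradiction.

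I do not expect any genuine obstacle here: the argument is a two-step slope chase, with the only subtle point being the passage from a nonzero section to an actual subbundle $\OO_C\subset E$. The rest is arithmetic of slopes that works thanks to the numerical identity $\mu(E)=1$, which is specific to the case $\deg E=r(g-1)$ with $g=2$.
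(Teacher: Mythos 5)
Your proof is correct and follows essentially the same route as the paper: a nonzero section gives the extension (the paper likewise deduces non-vanishing from stability and slope $1$, though your saturation argument spells it out), and semistability of $F$ follows from a slope chase using stability of $E$. The only cosmetic difference is that you test semistability on subsheaves of $F$ via their preimages in $E$, whereas the paper uses the dual formulation with destabilizing quotients of $F$ regarded as quotients of $E$; the arithmetic is the same.
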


\begin{proof}
Since  $E$ is stable, $E \simeq gr(E)$ and, as $[E] \in \Theta_r$,  $h^0(E) \geq 1$. 
Let $s \in H^0(E)$ be a non zero global section, since $E$ is stable  of slope $1$, $s$ cannot be zero in any point of $C$, so it defines  an injective map of sheaves 
$$ i_s \colon \OO_C \to E$$ which induces the following  exact sequence of vector bundles:
$$ 0 \to \OO_C \to E \to F \to 0,$$
where the  quotient $F$ is a vector bundle of rank $r-1$ and degree $r$. 
We will prove  that $F$ is semistable,  hence $[F] \in \U(r-1,r)$,  which implies that it is also stable. 
\vspace{2mm}

\noindent Let $G$  be a non trivial destabilizing quotient of $F$ of degree $k$ and rank $s$ with $1 \leq  s \leq r-2$.  
Since $G$ is also a quotient of $E$, by stability of $E$ we have
$$  1 = \mu(E)  < \mu(G) \leq \mu(F) = \frac{r}{r-1},$$
i.e.
    $$ 1 <  \frac{k}{s}  \leq 1  + \frac{1}{r-1}.$$
Hence we have
$$ s < k  \leq  s + \frac{s}{r-1}$$
which is impossible since $s < r-1$. 
\end{proof}

\noindent A short exact sequence of vector bundles
\begin{equation*}
 0 \to G \to E \to F \to 0,
 \end{equation*}
is said to be {\it an extension of $F$ by $G$}, see \cite{Ati}. Recall that equivalence classes of extensions of $F$ by $G$ are parametrized by
$$H^1(\mHom(F,G)) \simeq \Ext^1(F,G);$$
where the extension corresponding to $0\in\Ext^1(F,G)$ is $G\oplus F$ and it  is called the trivial extension. Given $v\in \Ext^1(F,G)$ we will denote by $E_v$ the vector bundle which is the extension of $F$ by $G$ in the exact sequence corresponding to $v$.
Moreover, if $v_2 = \lambda v_1$ for some $\lambda\in \C^*$, we have $E_{v_1}\simeq E_{v_2}$. Lastly, recall that  $\Ext^1$ is a functorial construction so are well defined on isomorphism classes of vector bundles.

\begin{lemma}
\label{LEM:COHF}
Let  $[F]\in \U(r-1,r)$, then  $\dim \Ext^1(F,\OO_C)=2r-1$.
\end{lemma}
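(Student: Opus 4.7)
The plan is to convert $\Ext^1(F,\OO_C)$ into a cohomology group on the curve and then use Riemann–Roch together with stability of $F$.

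First I would use the fact that $F$ is locally free to identify
\[
\Ext^1(F,\OO_C) \;=\; H^1(C,\mHom(F,\OO_C)) \;=\; H^1(C,F^*).
\]
Thus the computation reduces to determining $h^1(F^*)$, which by Riemann–Roch on the genus $2$ curve $C$ gives
\[
h^0(F^*) - h^1(F^*) \;=\; \chi(F^*) \;=\; \deg(F^*) + \rk(F^*)(1-g) \;=\; -r - (r-1) \;=\; -(2r-1).
\]
Hence it suffices to prove that $h^0(F^*)=0$, from which $h^1(F^*)=2r-1$ follows immediately.

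The key step, and the only one requiring content beyond bookkeeping, is to show that $F^*$ has no nontrivial global sections. Since $[F]\in\U(r-1,r)$ is stable (which follows from the hypothesis and the irreducibility/fine-moduli considerations recalled in Section~\ref{SUBSEC:1}; note that $r-1$ and $r$ are coprime so semistability is stability here), the dual bundle $F^*$ is also stable, with slope
\[
\mu(F^*) \;=\; -\frac{r}{r-1} \;<\; 0.
\]
Any nonzero section $s\in H^0(F^*)$ would give an injective sheaf map $\OO_C \hookrightarrow F^*$, exhibiting $\OO_C$ (of slope $0$) as a subsheaf of the stable bundle $F^*$ of strictly negative slope, which contradicts stability. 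Therefore $h^0(F^*)=0$, and combining with the Euler characteristic calculation above yields $\dim\Ext^1(F,\OO_C) = h^1(F^*) = 2r-1$, as claimed.

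The main (mild) obstacle is really just making sure the stability of $F^*$ and the slope sign are handled correctly; there is no deeper difficulty, and no appeal to Serre duality or to properties of $\omega_C$ is strictly needed, although one could alternatively argue via $\Ext^1(F,\OO_C)\simeq H^0(F\otimes\omega_C)^*$ and Riemann–Roch together with $h^1(F\otimes\omega_C)=h^0(F^*)=0$.
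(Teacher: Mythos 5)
Your proof is correct and follows essentially the same route as the paper: Riemann--Roch plus a single vanishing statement coming from (semi)stability. The only cosmetic difference is that you work directly with $F^*$ and prove $h^0(F^*)=0$ from the negative slope, whereas the paper first applies Serre duality to get $H^0(F\otimes\omega_C)^\vee$ and then uses $\mu(F\otimes\omega_C)>2g-2$ to kill $h^1(F\otimes\omega_C)$ --- these two vanishings are Serre-dual to each other, as you yourself note in your closing remark.
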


\begin{proof}
We have:  $\Ext^1(F,\OO_C) \simeq H^1(F^{\vee}) \simeq H^0(F\otimes \omega_C)^{\vee}$,
so by Riemann-Roch theorem:
$$\chi_C(F\otimes \omega_C)=\deg(F\otimes\omega_C)+\Rk(F\otimes\omega_C)(1-g(C))=2r-1.$$
Finally, since $\mu(F\otimes \omega_C)=3+\frac{1}{r-1}\geq 2g-1=3$, then $h^1(F \otimes {\omega}_C)= 0$. 
\end{proof}

\noindent Let $F$ be a stable bundle, with $[F] \in \U(r-1,r)$.  
The trivial extension $E_0 = \OO_C \oplus F$ gives an unstable vector bundle. However, this is the only unstable extension of $F$ by $\OO_C$  as it is  proved  in the following Lemma.

\begin{lemma} 
\label{LEM:FROMFTOE}
Let  $[F] \in \U(r-1,r)$ and $v \in \Ext^1(F,O_C)$  be a non zero vector. Then $E_v$ is a semistable vector bundle of rank $r$ and degree $r$, moreover $[E_v] \in \Theta_r$. 
\end{lemma}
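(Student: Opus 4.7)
The rank and degree of $E_v$ are immediate from the extension $0 \to \OO_C \to E_v \to F \to 0$, giving slope $\mu(E_v)=1$. Once semistability is established, to see $[E_v] \in \Theta_r$ I would argue as follows: the inclusion $\OO_C \hookrightarrow E_v$ furnishes a non-zero global section, so $h^0(E_v) \geq 1$; moreover a straightforward induction along any Jordan--H\"older filtration of $E_v$ (using that each short exact sequence $0 \to E_{i-1} \to E_i \to E_i/E_{i-1} \to 0$ gives $h^0(E_i) \leq h^0(E_{i-1}) + h^0(E_i/E_{i-1})$) yields $h^0(E_v) \leq h^0(gr(E_v))$, whence $h^0(gr(E_v)) \geq 1$ and $[E_v] \in \Theta_r$.

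For semistability I would argue by contradiction. Suppose $G \subseteq E_v$ is a subbundle with $0 < s := \rank G < r$ and $d := \deg G > s$. Restricting the projection $E_v \to F$ to $G$ gives a sequence $0 \to K \to G \to \bar G \to 0$ where $K = G \cap \OO_C$ is of rank $k \in \{0,1\}$ inside $\OO_C$ (so $\deg K \leq 0$) and $\bar G \subseteq F$ has rank $s-k$. Since $\gcd(r-1,r) = 1$, the hypothesis $[F] \in \U(r-1,r)$ implies that $F$ is stable, so any proper non-zero subsheaf of $F$ has slope strictly less than $\mu(F) = r/(r-1)$; when the rank of such a subsheaf is at most $r-2$, integrality of degrees further forces its slope to be $\leq 1$.

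The cases are then: (a) $k=0$ and $s \leq r-2$, where stability of $F$ yields $d = \deg \bar G \leq s$, contradicting $d > s$; (b) $k=0$ and $s = r-1$, where $\bar G \subseteq F$ has maximal rank, so $\deg \bar G \leq \deg F = r$; the borderline $\deg \bar G = r$ forces $\bar G = F$, making $G \to F$ an isomorphism and hence splitting the extension, which contradicts $v \neq 0$, while $\deg \bar G \leq r-1 = s$ contradicts $d > s$ directly; (c) $k=1$ and $s=1$, where $G = K \subseteq \OO_C$ has $d \leq 0 < s$; (d) $k=1$ and $s \geq 2$, where $\bar G \subseteq F$ has rank $s-1 \leq r-2$, so $\deg \bar G \leq s-1$ and $d \leq 0 + (s-1) < s$. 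Each case yields a contradiction, proving semistability.

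The main obstacle is the borderline subcase $k=0$, $s = r-1$, $d = r$: this is precisely the point where the hypothesis $v \neq 0$ genuinely enters, since otherwise $\bar G = F$ would produce a splitting of the extension and destabilize $E_v$. Every other case reduces to straightforward numerical bookkeeping combining the stability bound $\mu(\bar G) < r/(r-1)$ with the non-positive contribution $\deg K \leq 0$ from $\OO_C$.
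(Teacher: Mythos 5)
Your proof is correct and follows essentially the same strategy as the paper's: intersect a putative destabilizing subbundle $G$ of $E_v$ with $\OO_C$, push the quotient into $F$, and use stability of $F$ together with integrality of degrees to derive a contradiction. If anything, your case analysis is slightly more careful than the paper's, since you explicitly treat the full-rank borderline case $\deg \bar G = \deg F$ (the one place where $v \neq 0$ is genuinely needed, to exclude a splitting), separate out the rank-one case to avoid a degenerate denominator, and justify $h^0(gr(E_v)) \geq 1$ rather than only $h^0(E_v) \geq 1$.
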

\begin{proof}
By lemma \ref{LEM:COHF} $\dim \Ext^1(F,\OO_C)=2r-1>0$, let  $v\in \Ext^1(F,\OO_C)$ be a non zero vector and denote by $E_v$ the corresponding vector bundle. By construction we have an exact sequence of vector bundles
$$0\rightarrow \OO_C\rightarrow E_v\rightarrow F\rightarrow 0$$
from which we deduce  that $E_v$ has rank $r$ and degree $r$.
\vspace{2mm}

\noindent Assume that $E_v$ is not semistable. Then there exists a proper subbundle $G$ of $E_v$ with $\mu(G)>\mu(E_v)=1$. Denote with $s$ and $k$ respectively the rank and the degree of $G$. Hence we have
$$1\leq s\leq r-1\qquad k>s.$$
Let $\alpha$ be the composition of the inclusion $G\hookrightarrow E_v$ with  the surjection $\varphi:E_v\rightarrow F$, let  $K = \ker \alpha$. Then we have a commutative diagram
$$
\xymatrix{
        & 0 \ar[d] & 0 \ar[d] & 0\ar[d]\\
0\ar[r] & K \ar@{^{(}->}[r]\ar[d] & G \ar[rd]^{\alpha}\ar[r]\ar[d] & Im(\alpha)\ar[r]\ar[d] & 0\\
0\ar[r] & \OO_C\ar@{^{(}->}[r] & E_v \ar[r]_-{\varphi} & F\ar[r] & 0
}
$$
If $K=0$ then $G$ is a subsheaf of $F$,  which is stable,  so
$$\mu(G)=\frac{k}{s}<\mu(F)=1+\frac{1}{r-1}$$
and
$$s<k<s+\frac{s}{r-1},$$
which  is impossible as $1\leq s\leq r-1$. Hence we have that $\alpha$ has non trivial kernel $K$, which is a subsheaf of $\OO_C$, so   $K=\OO_C(-A)$ for some divisor $A\geq 0$ with degree $a\geq 0$. Then $Im(\alpha)$  is a subsheaf of $F$, which is stable so:
$$\frac{k+a}{s-1}<1+\frac{1}{r-1},$$
hence we have
$$s+a<k+a<s-1+\frac{s-1}{r-1}$$ and $$a<-1+\frac{s-1}{r-1}$$
which is impossible as $a\geq 0$. This proves that  $E_v$ is semistable. Finally, note  that we have 
 $h^0(E_v)\geq h^0(\OO_C)=1$, so $[E] \in \Theta_r$.
\end{proof}


\noindent We would like to study extensions  of $F \in \U(r-1,r)$  by $\OO_C$ which give vector bundles of  ${\Theta}_r \setminus {\Theta}_r^s$.  Note that if $E_v $ is not stable, then there exists a proper subbundle $S$ of $E_v$ with slope $1$. 
We will prove that any such $S$ actually comes from a subsheaf  of $F$ of slope $1$. 
\vspace{2mm}

\noindent Let $[F] \in \U(r-1,r)$, observe that any proper subbsheaf  $S$ of $F$ has slope $\mu(S) \leq 1$. Indeed, let $s = \rk(S) \leq r-1$, by stability of $F$ we   have 
$$\frac{\deg(S)}{s} < 1 + \frac{1}{r-1},$$
which implies $\deg(S) < s + \frac{s}{r-1}$, hence $\deg(S) \leq s$. 
Assume that $S$ is a subsheaf of slope $1$. Then we are in one of the following cases:
\begin{itemize}
\item A subsheaf $S$  of $F$ with slope $1$ and rank $s \leq r-2$  is  a subbundle of $F$ and it is  called  {\it a maximal subbundle}  of $F$ of rank $s$. Note that any maximal subbundle $S$ is semistable and thus $[S] \in \U(s,s)$. Moreover, the set $\mM_s(F)$ of maximal subbundles of $F$ of rank $s$ has a natural scheme structure given by identifying it with a Quot-scheme (see \cite{L-N}, \cite{L-Ne} for details).
\item A subsheaf  $S$  of $F$ of slope $1$ and rank $r-1$ is obtained by {\it an elementary transformation } of $F$ at a point $p \in C$, i.e. it fits into an exact sequence as follows:
$$0 \to S \to F \to {\mathbb C}_p \to 0.$$
More precisely, let's denote with $F_p$ the fiber of $F$ at $p$, all the elementary transformations of $F$ at $p$ are parametrized by ${\mathbb P}(\Hom(F_p,\C))$. In fact, for any non zero form  $\gamma \in \Hom(F_p,\C)$,   by composing it with the restriction map $F \to F_p$, we obtain a surjective morphism $F \to {\mathbb C}_p$ and then an exact sequence
$$0 \to G_{\gamma} \to F \to {\mathbb C}_p \to 0,$$
where $G_{\gamma}$ is actually a vector bundle which is obtained by the elementary tranformation of $F$  at $p$  defined by $\gamma$. Finally, $G_{\gamma_1} \simeq G_{\gamma_2}$ if and only if $[\gamma_1]= [{\gamma_2}]$ in $ {\mathbb P}(\Hom(F_p,\C))$,  see \cite{Mar82} and \cite{B17}.
\end{itemize}

\noindent We have the following result: 
 
\begin{prop}
\label{PROP:EXTEMAXSUBF}
Let  $[F] \in \U(r-1,r)$,  $v \in \Ext^1(F,\OO_C)$ a non zero vector and $E_v$ the extension of $F$ defined by $v$. 
If $G$ is a  proper subbundle of $E_v$  of slope $1$, then $G$ is semistable and satisfies one of the following conditions: 
\begin{itemize}
\item $G$ is a maximal subbundle of $F$ and $1\leq \rk(G)\leq r-2$;
\item $G$ has rank $r-1$ and it is obtained by an elementary transformation of $F$. 
\end{itemize}
\end{prop}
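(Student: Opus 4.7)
The strategy is essentially the same diagram-chase as in Lemma \ref{LEM:FROMFTOE}, but pushed one step further to extract the structure of $G$. Let $\varphi\colon E_v\to F$ be the projection coming from the defining sequence of $E_v$, let $\alpha\colon G\hookrightarrow E_v\to F$ be the composition, and set $K=\ker\alpha=G\cap\OO_C$. Denote $s=\rk(G)$ and write $\mu(G)=1$, so $\deg(G)=s$. Since $G$ is a proper subbundle, $1\le s\le r-1$. The kernel $K$ is either zero or a line subsheaf of $\OO_C$, hence $K=\OO_C(-A)$ for some effective divisor $A$ of degree $a\ge 0$.

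The first step is to rule out the case $K\neq 0$. If $K=\OO_C(-A)$, then $\alpha(G)$ is a subsheaf of $F$ of rank $s-1$ and degree $s+a$. Note $s\ge 2$, because for $s=1$ one would have $G=K\subset\OO_C$ with $\deg(G)=1$, which is impossible. As $s-1\le r-2$, the sheaf $\alpha(G)$ is a proper subsheaf of the stable bundle $F$, so stability gives $(s+a)(r-1)<s(r-1)+s$, i.e. $a(r-1)<s-r$; since $s\le r-1$, this forces $a<0$, a contradiction.

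The second step is to analyse the remaining case $K=0$, where $\alpha$ realises $G$ as a subsheaf of $F$ of rank $s$ and degree $s$. If $s=r-1$, then $F/\alpha(G)$ is a torsion sheaf of length $r-(r-1)=1$, hence $F/\alpha(G)\simeq\C_p$ for some $p\in C$, which exhibits $G$ as an elementary transformation of $F$ at $p$. If instead $1\le s\le r-2$, let $\widetilde G$ be the saturation of $\alpha(G)$ inside $F$. Then $\widetilde G$ is a subbundle of rank $s$ with $\deg(\widetilde G)\ge s$; by the a priori bound recalled before the proposition (stability of $F$ forces $\deg(S)\le s$ for every proper subsheaf $S\subset F$ of rank $s\le r-2$), we get $\deg(\widetilde G)=s$, so $\widetilde G=\alpha(G)$. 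Therefore $G$ coincides with a maximal subbundle of $F$ of rank $s$.

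Finally, semistability of $G$ is immediate in both cases: any proper subsheaf $G'\subset G$ of rank $s'<s$ sits inside $F$ as a proper subsheaf of rank $s'\le r-2$, so the same a priori bound yields $\mu(G')\le 1=\mu(G)$. The main delicate point here is the Case $K=0$ with $s\le r-2$, where one must argue that the subsheaf $\alpha(G)\subset F$ is already saturated rather than merely a subsheaf; all the rest is bookkeeping built on Lemma \ref{LEM:FROMFTOE} together with the stability of $F$.
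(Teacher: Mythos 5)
Your proof is correct and follows essentially the same route as the paper's: the same kernel diagram for $\alpha\colon G\to F$, the same dichotomy $K\neq 0$ versus $K=0$, and the same saturation argument in the case $s\le r-2$ (you even treat the $s=1$ subcase of $K\neq 0$ more explicitly than the paper does, and your semistability argument via the degree bound on subsheaves of $F$ is an equally valid variant of the paper's "would destabilize $E_v$" argument). The only blemish is a typo in the first step: stability of $F$ gives $(s+a)(r-1)<r(s-1)=s(r-1)+s-r$, not $s(r-1)+s$, but the consequence $a(r-1)<s-r$ that you actually use is the correct one.
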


\begin{proof}
Let $s=\Rk(G)=\deg(G)$.  As in the proof of Lemma \ref{LEM:FROMETOF} we can construct a commutative diagram
$$
\xymatrix{
        & 0 \ar[d] & 0 \ar[d] & 0\ar[d]\\
0\ar[r] & K \ar@{^{(}->}[r]\ar[d] & G \ar[r]\ar[d]\ar[dr]^{\alpha} & Im(\alpha)\ar[r]\ar[d] & 0\\
0\ar[r] & \OO_C\ar@{^{(}->}[r] & E_v \ar[r]_{\varphi} & F\ar[r] & 0
}
$$
form which we obtain that either $K=0$ of $K=\OO_C(-A)$ with $A\geq 0$. In the second case, let $a$ be the degree of $A$. As in the proof of Lemma \ref{LEM:FROMETOF}, we have that the slope of $Im(\alpha)$ satisfies
$$\mu(Im(\alpha))=\frac{s+a}{s-1}<1+\frac{1}{r-1}$$
which gives a contradiction 
$$0\leq a<-1+\frac{s-1}{r-1}.$$
So can assume that $K=0$, so $\alpha \colon G \to F$ is an injective map of sheaves,  we denote by  $Q$  the quotient. 
\vspace{2mm}

\noindent If $s=r-1$ we have that $Q$ is a torsion sheaf  of degree $1$, i.e. a skyscraper sheaf over  a  point with the only non trivial fiber of dimension $1$. Hence $G$ is obtained by  an elementary transformation of $F$ at a point $p \in C$. 
\vspace{2mm}

\noindent If $s\leq r-2$, we claim that $\alpha$ is an injective map of vector bundles. On the contrary, if  $G$ is not a subbundle, then    $Q$ is not locally free, so 
 there exists a subbundle  $G_f  \subset F$  containing $\alpha(G)$,  with $\Rk(G_f) = \Rk(G)$ and $deg(G_f) = deg G + b$, $b \geq 0$:  
$$
\xymatrix{
        & 0 \ar[rd] & 0 \ar[d] & 0 \ar[d] \\
&  & G \ar[d]\ar@{^{(}->}[r]\ar[dr]^{\alpha} & G_f \ar[d]& \\
0\ar[r] & \OO_C\ar@{^{(}->}[r] & E_v \ar[r]_{\phi} & F\ar[r]\ar[rd]\ar[d] & 0\\
&&&Q_f \ar[d] & Q \ar[rd]\ar@{->>}[l]\\
&&&0&&0\\
}
$$
Then, as $F$ is stable, we have:
$$\mu(G_f)=\frac{s+b}{s}=1+\frac{b}{s}<1+\frac{1}{r-1}=\mu(F),$$
hence 
$$0\leq b<\frac{s}{r-1}$$
which implies $b= 0$.

\vspace{2mm}

\noindent Finally, note that $G$ is semistable. In fact, since $\mu(G) = \mu(E_v)$, a subsheaf of $G$  destabilizing $G$ would be a subsheaf destabilizing $E_v$. 
\end{proof}

\noindent Let $S$ be a subsheaf of $F$ with slope $1$, we ask when $S$ is a subbundle of the extension $E_v$ of $F$ by $\OO_C$ defined by $v$.  

\begin{definition}
Let  $\varphi:E\rightarrow F$ and $f: S\rightarrow F$ be morphisms of sheaves. We say that $f$ can be lifted to $\tilde{f}:S\rightarrow E$ if we have a commutative diagram
$$ 
\xymatrix{
 & S \ar[d]^-{f}\ar[ld]_-{\tilde{f}}  \\
E\ar[r]_-{\varphi} & F 
}
$$
we say that $\tilde{f}$ is a  lift of $f$.
\end{definition}

\begin{lemma} 
\label{lift}
Let $F  \in \U(r-1,r)$, $v \in \Ext^1(F,\OO_C)$  be a non zero vector and $E_v$  the extension of $F$ defined by $v$.
Let  $S$ be a vector bundle of slope $1$ and $\iota \colon S \to F$ be an injective  map of sheaves.
Then $i$ can be lifted to $ E_v$ if and only if $v \in \Ker H^1(\iota^*)$ where 
$$H^1(i^*) \colon H^1(C, {\mathcal Hom}(F,\OO_C)) \to H^1(C,{\mathcal  Hom}(S,\OO_C))$$ 
is the map  induced by $i$. 
If $v \in \Ker H^1(\iota^*)$ we will say that $v$ extends $\iota$. 
\end{lemma}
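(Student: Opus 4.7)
The plan is to derive the lifting criterion from the long exact sequence obtained by applying $\Hom(S,-)$ to the defining extension of $E_v$, after first identifying Ext groups with the relevant sheaf cohomology.

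\medskip

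First, since $S$ and $F$ are locally free on a smooth curve, $\mc{E}xt^i(F,\OO_C)=\mc{E}xt^i(S,\OO_C)=0$ for $i\geq 1$, so the local-to-global spectral sequence collapses and gives canonical isomorphisms
$$\Ext^1(F,\OO_C)\simeq H^1(C,\mHom(F,\OO_C)),\qquad \Ext^1(S,\OO_C)\simeq H^1(C,\mHom(S,\OO_C)).$$
Under these identifications, the cohomology map $H^1(\iota^*)$ coincides with the pullback $\iota^{*}\colon \Ext^1(F,\OO_C)\to \Ext^1(S,\OO_C)$ (Yoneda composition with $\iota$), since $\iota^*\colon \mHom(F,\OO_C)\to \mHom(S,\OO_C)$ is precisely the sheaf-level map $\varphi\mapsto \varphi\circ\iota$.

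\medskip

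Next, apply the left exact functor $\Hom(S,-)$ to the short exact sequence
$$0\to \OO_C \to E_v \xrightarrow{\varphi} F \to 0$$
representing $v$. This yields the long exact sequence
$$\Hom(S,E_v)\xrightarrow{\varphi_*}\Hom(S,F)\xrightarrow{\partial_v}\Ext^1(S,\OO_C)\to \Ext^1(S,E_v)\to\cdots$$
By definition of a lift, the injective sheaf map $\iota\colon S\to F$ lifts to a morphism $\tilde{\iota}\colon S\to E_v$ with $\varphi\circ\tilde{\iota}=\iota$ if and only if $\iota$ lies in the image of $\varphi_*$, equivalently in the kernel of the connecting homomorphism $\partial_v$.

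\medskip

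The key computation is the standard identity $\partial_v(\iota)=\iota^{*}(v)$: the connecting map for the extension with class $v$ sends a morphism $\iota\colon S\to F$ to the Yoneda product $v\cdot\iota$, which is precisely the pullback of the extension $v$ along $\iota$ (concretely, one obtains it by forming the fiber product of $E_v\to F$ with $\iota$, and the resulting extension of $S$ by $\OO_C$ is classified by $\iota^{*}(v)$). Combining this identity with the preceding paragraph gives
$$\iota\text{ lifts to }E_v\;\Longleftrightarrow\;\partial_v(\iota)=0\;\Longleftrightarrow\;\iota^{*}(v)=0\;\Longleftrightarrow\;v\in\ker H^1(\iota^{*}),$$
as claimed. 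The only non-routine ingredient is recognizing the connecting homomorphism as Yoneda/pullback of extension classes; everything else is formal manipulation of long exact sequences and the vanishing of $\mc{E}xt^1$ for bundles on a smooth curve.
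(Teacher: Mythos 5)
Your proof is correct. The paper does not actually prove this lemma --- it simply refers the reader to [NR69] --- and your argument (apply $\Hom(S,-)$ to the extension sequence $0\to\OO_C\to E_v\to F\to 0$, note that $\iota$ lifts iff it lies in the kernel of the connecting map, and identify that connecting map with the Yoneda pullback $v\mapsto \iota^*(v)=H^1(\iota^*)(v)$) is exactly the standard proof found in that reference, so it matches the intended argument.
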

\noindent For the proof see \cite{N-R}. The above lemma allows us to prove the following result: 

\begin{prop}
Let $[F]\in \U(r-1,r)$. Then:
\begin{itemize}
\item Let  $G_{\gamma}$ be the elementary transformation of $F$ at $p\in C$ defined by $[\gamma]\in \PP(F_p^{\vee})$, there exists a unique $[v] \in \PP(\Ext^1(F,\OO_C))$ such that the inclusion  $G_{\gamma} \hookrightarrow F$ can be lifted to $E_v$.
\item Let $S$ be a maximal subbundle of $F$ of rank $s$ and  $\iota:S\hookrightarrow F$ the inclusion, then the set of classes $[v]$ which extend $\iota$ is  a linear subspace of $\PP(\Ext^1(F,\OO_C))$ of dimension $2r-2s-2$. 
\end{itemize}
In particular, for any maximal subbundle of $F$ and for any elementary transformation, we obtain at least an extension of $F$ which is in $\Theta_r \setminus \Theta_r^s$. 
\end{prop}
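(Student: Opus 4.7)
The plan is to invoke Lemma \ref{lift} in both cases: lifts of an inclusion $\iota:S\hookrightarrow F$ are parametrized by $\Ker H^1(\iota^*)\subseteq H^1(F^\vee)=\Ext^1(F,\OO_C)$, so each assertion reduces to a cohomological computation of this kernel. Throughout I would use that $F$ is stable of slope $r/(r-1)$, hence $H^0(F^\vee)=0$.

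For the elementary transformation, I would start from the defining sequence $0\to G_\gamma\to F\to \mathbb{C}_p\to 0$ and apply $\mathcal{H}om(-,\OO_C)$. Since $F$ and $G_\gamma$ are locally free, $\mathcal{H}om(\mathbb{C}_p,\OO_C)=0$ and $\mathcal{E}xt^1(\mathbb{C}_p,\OO_C)\simeq \mathbb{C}_p$, this yields
$$ 0\to F^\vee\to G_\gamma^\vee\to \mathbb{C}_p\to 0. $$
The long exact sequence in cohomology then gives $\Ker H^1(\iota^*)\simeq H^0(\mathbb{C}_p)\simeq \mathbb{C}$, provided $H^0(G_\gamma^\vee)=0$. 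I would obtain the latter by first checking that $G_\gamma$ is semistable: any subsheaf of $G_\gamma$ of rank $t\leq r-2$ and slope $>1$ would also be a subsheaf of $F$ forced to satisfy $t+1\leq \deg<t+t/(r-1)$, which is impossible, while rank-$(r-1)$ subsheaves of $G_\gamma$ have slope $\leq 1$ automatically; this is the same slope argument used in the proof of Lemma \ref{LEM:FROMFTOE}. Then $G_\gamma^\vee$ is semistable of negative slope, hence has no global sections, and the projectivization of the resulting one-dimensional kernel is the required unique point of $\PP(\Ext^1(F,\OO_C))$.

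For a maximal subbundle $S$ of rank $s$, the saturation of $S$ in $F$ makes $F/S$ locally free, so dualizing $0\to S\to F\to F/S\to 0$ gives
$$ 0\to (F/S)^\vee\to F^\vee\to S^\vee\to 0. $$
In the induced long exact sequence the terms $H^0(F^\vee)$ and $H^0(S^\vee)$ both vanish, because $F$ is stable and $S$ is semistable of positive slope. Hence $\Ker H^1(\iota^*)\simeq H^1((F/S)^\vee)$. By Serre duality this equals $h^0((F/S)\otimes \omega_C)$, which by Riemann-Roch is $2r-2s-1+h^1((F/S)\otimes \omega_C)$. To finish I would verify $h^1((F/S)\otimes \omega_C)=0$, equivalently $h^0((F/S)^\vee)=0$: a nonzero map $F/S\to \OO_C$ composed with the projection $F\to F/S$ yields a nonzero map $F\to \OO_C(-D)$ with $D$ effective of degree $d\geq 0$, whose kernel has rank $r-2$ and slope $(r+d)/(r-2)\geq r/(r-2)>r/(r-1)=\mu(F)$, contradicting the stability of $F$. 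Therefore $\Ker H^1(\iota^*)$ has dimension $2r-2s-1$ and its projectivization has dimension $2r-2s-2$.

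The last assertion is then immediate: any such $[v]$ produces an extension $E_v$ containing either $G_\gamma$ or $S$ as a proper subbundle of slope $1=\mu(E_v)$, so $E_v$ is strictly semistable and $[E_v]\in \Theta_r\setminus \Theta_r^s$. The main obstacle I foresee is precisely the vanishing $h^0((F/S)^\vee)=0$ used in part (ii): because $F/S$ is merely a locally free quotient of a stable bundle and need not itself be semistable, one cannot invoke a semistability vanishing and the argument has to go through the stability of $F$ directly via the slope contradiction indicated above.
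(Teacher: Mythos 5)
Your proposal is correct and follows essentially the same route as the paper: both reduce each bullet to Lemma \ref{lift}, dualize the defining sequences $0\to G_\gamma\to F\to \C_p\to 0$ and $0\to S\to F\to F/S\to 0$, and compute $\dim\Ker H^1(\iota^*)$ from the resulting long exact sequences. The only difference is cosmetic: you read the kernel off as $H^0(\C_p)$ resp. $H^1((F/S)^\vee)$ and therefore supply the vanishings $h^0(G_\gamma^\vee)=0$ and $h^0((F/S)^\vee)=0$ (the latter correctly obtained from stability of $F$ rather than from a nonexistent semistability of $F/S$), whereas the paper computes the same dimension as $h^1(F^\vee)-h^1(G_\gamma^\vee)$ resp. $h^1(F^\vee)-h^1(S^\vee)$ and leaves these verifications implicit.
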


\begin{proof}
Let's start with the case of elementary transformation. We are looking for the extensions of $F$ by $\OO_C$  such that  there exists a lift $\tilde{\iota} \colon G_{\gamma} \to E_v$ such that the diagram
$$
\xymatrix{
 & & & G_{\gamma} \ar@/_2pc/@{^{(}->}[dl]
\ar@{^{(}->}[d]^{\iota}& \\
 0\ar[r] & \OO_C\ar@{^{(}->}[r]\ar[d] & E_v \ar[r]_{\phi_v}\ar[d] & F\ar[r]\ar[d]^{\gamma} & 0\\
0\ar[r] & \OO_C\ar[r]\ar[d] & \OO_C(p)\ar[r]\ar[d] & \C_p\ar[d]\ar[r] & 0 \\
 & 0 & 0 & 0 \\
}
$$
commutes. By Lemma \ref{lift}, there exists $\tilde{\iota}$  if and only if the class of the extension $E_v$ lives in the kernel of $H^1(\iota^*)$ in the diagram
\begin{equation}
\xymatrix{
\Hom(F,\OO_C)\ar@{^{(}->}[r]\ar[d]^{\iota^*} & \Hom(F,E_v) \ar[r]^{\phi_v^*}\ar[d]^{\iota^*} & \Hom(F,F) \ar[r]^{\delta_v}\ar[d]^{\iota^*} & \Ext^1(F,\OO_C) \ar[d]^{H^1(\iota^*)}\\
\Hom(G_{\gamma},\OO_C)\ar@{^{(}->}[r] & \Hom(G_{\gamma},E_v) \ar[r]_{\phi_v^*} & \Hom(G_{\gamma},F) \ar[r]_{\delta} & \Ext^1(G_{\gamma},\OO_C)\\
}
\end{equation}
If we apply the functor $\mHom(-,\OO_C)$ to the vertical exact sequence we obtain the exact sequence
$$0\rightarrow F^{\vee}\rightarrow G_{\gamma}^{\vee}\rightarrow \C_p \rightarrow 0$$
from which we obtain 
$$\cdots \to H^1(F^{\vee})\rightarrow H^1(G_{\gamma}^{\vee})\rightarrow 0.$$
In particular, the map $H^1(\iota^*)$ is surjective so its kernel has dimension 
\begin{multline}
\dim(\ker(H^1(\iota^*)))=\ext^1(F,\OO_C)-\ext^1(G_{\gamma},\OO_C)=\\
=h^0(F\otimes \omega_C)-h^0(G_{\gamma}\otimes \omega_C)=2r-1-2(r-1)=1.
\end{multline}
Hence there exist only one possible extension which extend $\iota$.
\vspace{2mm}

\noindent Let $S$ be a  maximal subbundle  of $F$ of rank $s$, $1\leq s\leq r-2$, and let $\iota:S\rightarrow F$ the inclusion. By Lemma \ref{lift},  we have that the set of $[v]$ which extends  $\iota$ lifts is $\PP(\ker(H^1(\iota^*))$.
As in the previous case,  one can verify that  $H^1(\iota^*)$ is surjective and 
\begin{multline}
\dim (\ker(H^1(\iota^*)))=\ext^1(F,\OO_C)-\ext^1(S,\OO_C)=\\
=h^0(F\otimes \omega_C)-h^0(S \otimes \omega_C)=2r-1-2(s)=2r-2s -1.
\end{multline}
\end{proof}

\noindent The above properties of extensions allow us to give the following description of theta divisor of $\U(r,r)$: 

\begin{theorem}
\label{THM:RIS1}
There exists a vector bundle $\mV$  on $\U(r-1,r)$ of rank $2r-1$ whose fiber at the point $[F] \in \U(r-1,r)$ is 
$\Ext^1(F,\OO_C)$. Let $\PP(\mV)$ be the associated projective bundle and 
$\pi \colon \PP(\mV) \to \U(r-1,r)$ the  natural projection. Then, the map 
$$ \Phi \colon \PP(\mV) \to {\Theta}_r$$
sending $[v]$ to $[E_v]$, where $E_v$ is the extension of $\pi([v])$  by $\OO_C$ defined by $v$, is a birational morphism. 
\end{theorem}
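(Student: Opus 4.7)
My plan is to (i) construct $\mathcal{V}$ as a relative $\Ext^1$ sheaf, (ii) produce $\Phi$ as the morphism classifying a universal extension on $\PP(\mathcal{V})\times C$, and (iii) prove birationality by writing down an explicit inverse on the dense open subset of $\Theta_r$ of stable bundles with a one-dimensional space of sections.

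\emph{Building $\mathcal{V}$ and $\Phi$.} Since $\gcd(r-1,r)=1$, $\mathcal{U}_C(r-1,r)$ is a fine moduli space and carries a universal bundle $\mathcal{F}$ on $\mathcal{U}_C(r-1,r)\times C$. Let $p$ be the projection to $\mathcal{U}_C(r-1,r)$ and set $\mathcal{V}:=R^1 p_*(\mathcal{F}^\vee)$. For every stable $[F]$ one has $h^0(F^\vee)=0$ (because $\mu(F^\vee)<0$ and $F^\vee$ is stable) and $h^1(F^\vee)=\ext^1(F,\mathcal{O}_C)=2r-1$ by Lemma \ref{LEM:COHF}; cohomology and base change then makes $\mathcal{V}$ locally free of rank $2r-1$ with fiber $\Ext^1(F,\mathcal{O}_C)$ at $[F]$. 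Now let $\pi\colon\PP(\mathcal{V})\to\mathcal{U}_C(r-1,r)$ be the associated projective bundle, set $\mathcal{F}':=(\pi\times\mathrm{id}_C)^*\mathcal{F}$ and let $q$ denote the projection $\PP(\mathcal{V})\times C\to\PP(\mathcal{V})$. Flat base change gives $R^1 q_*((\mathcal{F}')^\vee)=\pi^*\mathcal{V}$, so
$$
\Ext^1\bigl(\mathcal{F}',\,q^*\mathcal{O}_{\PP(\mathcal{V})}(1)\bigr)\,\cong\,H^0\bigl(\PP(\mathcal{V}),\,\pi^*\mathcal{V}\otimes\mathcal{O}_{\PP(\mathcal{V})}(1)\bigr)\,\cong\,\Hom(\mathcal{O}_{\PP(\mathcal{V})}(-1),\pi^*\mathcal{V}).
$$
I take the class $\xi$ corresponding to the tautological inclusion $\mathcal{O}_{\PP(\mathcal{V})}(-1)\hookrightarrow \pi^*\mathcal{V}$. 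The resulting universal extension
$$
0\to q^*\mathcal{O}_{\PP(\mathcal{V})}(1)\to\mathcal{E}\to\mathcal{F}'\to 0
$$
restricts on each fiber $C\times\{[v]\}$ to $0\to\mathcal{O}_C\to E_v\to F\to 0$ with extension class $v$ up to scalar. By Lemma \ref{LEM:FROMFTOE} each $E_v$ is semistable with $[E_v]\in\Theta_r$, so the coarse moduli property of $\mathcal{U}_C(r,r)$ applied to the family $\mathcal{E}$ produces a morphism $\Phi\colon\PP(\mathcal{V})\to\Theta_r$ with $\Phi([v])=[E_v]$, as required.

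\emph{Birationality.} A direct count gives $\dim\PP(\mathcal{V})=((r-1)^2+1)+(2r-2)=r^2=\dim\Theta_r$. Let $U\subset\Theta_r$ be the open subset of stable $[E]$ with $h^0(E)=1$; by the description of singularities of $\Theta_r^s$ recalled in Section \ref{SUBSEC:1}, the complement $(\Theta_r\setminus\Theta_r^s)\cup\Sing(\Theta_r^s)$ is a proper closed subset, so $U$ is nonempty and dense. For $[E]\in U$, Lemma \ref{LEM:FROMETOF} realises $E$ as an extension $0\to\mathcal{O}_C\to E\to F\to 0$ with $[F]\in\mathcal{U}_C(r-1,r)$, and the condition $h^0(E)=1$ forces the section $\mathcal{O}_C\hookrightarrow E$, hence the quotient $F$ and the class $[v]\in\PP(\Ext^1(F,\mathcal{O}_C))$, to be unique; therefore $\Phi^{-1}([E])=\{[v]\}$ for every $[E]\in U$. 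This gives generic injectivity, and since $\Phi$ is proper (both source and target are projective) with image containing the dense open $U$, it is also surjective; a surjective, generically injective morphism of irreducible varieties of the same dimension is birational.

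\emph{Main obstacle.} The most delicate point is the universal-extension construction: one must carry out the base change computation yielding $R^1 q_*((\mathcal{F}')^\vee)=\pi^*\mathcal{V}$ and check that the class $\xi$ induced by the tautological inclusion really restricts fiberwise to the scalar class of $v$ in $\Ext^1(F,\mathcal{O}_C)$. Once this is in place, the remaining steps are essentially forced by the previous lemmas: semistability of $E_v$ by Lemma \ref{LEM:FROMFTOE}, surjectivity of $\Phi$ by Lemma \ref{LEM:FROMETOF}, and generic injectivity by the uniqueness of the destabilising section on $U$.
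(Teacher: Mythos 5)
Your proof is correct and follows essentially the same route as the paper: the same construction of $\mV$ as a relative $R^1$/$\Ext^1$ sheaf from the Poincar\'e bundle, the same dimension count, and the same dense open set $U$ of stable bundles with $h^0(E)=1$ on which the fibre of $\Phi$ is a single point by uniqueness of the section. The only (immaterial) difference is that you build the universal extension directly on $\PP(\mV)\times C$ via the tautological inclusion $\OO_{\PP(\mV)}(-1)\hookrightarrow\pi^*\mV$, whereas the paper invokes Narasimhan--Ramanan's Proposition 3.1 to get a family on $C\times\mV$ and then descends to $\PP(\mV)$; both hinge on the same vanishing $H^0(F^{\vee})=0$.
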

\begin{proof}
As $r$ and $r-1$ are coprime,  there exists a Poincar\'e   bundle ${\mathcal P}$ on $\U(r-1,r)$, i.e. ${\mathcal P}$ is a vector bundle on $C \times \U(r-1,r)$ such that 
 ${\mathcal P}\vert_{ C \times [F]} \simeq F$ for any $[F] \in \U(r-1,r)$, see \cite{Ram73}.  
 Let $p_1$ and $p_2$ denote the projections of $C \times \U(r-1,r)$ onto factors. 
Consider on $C \times \U(r-1,r)$ the vector bundle $p_1^*(\OO_C)$,  note that ${p_1^*(\OO_C)}\vert_{ C \times [F]} \simeq \OO_C$, for any $[F] \in \U(r-1,r)$. 
Let consider on $\U(r-1,r)$ the first direct image of the sheaf $\mHom({\mathcal P},p_1^*(\OO_C))$, i.e. the sheaf
\begin{equation}
\label{V}
\mV = R^1{p_2}_* {\mathcal Hom}({\mathcal P},p_1^*(\OO_C)).
\end{equation}
For any $[F] \in \U(r-1,r)$ we have
$$\mV_{[F]} = H^1(C,\mHom({\mathcal P},p_1^*(\OO_C))\vert_{ C \times [F]})= H^1(C,\mHom(F,\OO_C))=\Ext^1(F,\OO_C)$$
which, by lemma \ref{LEM:COHF}, has dimension $2r-1$. Hence we can conclude that $\mV$ is a vector bundle on $\U(r-1,r)$ of rank $2r-1$ whose fibre at $[F]$ is actually $\Ext^1(F,\OO_C)$.  Let's consider the projective bundle associated to $\mV$ and the natural projection map 
$$\pi:\PP(\mV) \to \U(r-1,r).$$
Note that for any  $[F] \in \U(r-1,r)$ we have:
$$ H^0(C, \mHom({\mathcal P},p_1^*(\OO_C))\vert_{C \times [F]})= H^0(C, \mHom(F,\OO_C))= H^0(C,F^*) = 0,$$
since $F$ is stable with positive slope. Then by \cite[Proposition 3.1]{N-R}, there exists a vector bundle $\mathcal{E}$ on $C \times \mV$ such that 
for any point $v \in \mV$ the restriction 
$\mathcal{E}\vert_{ C \times v}$ is naturally identified with the extension $E_v$ of $F$ by $\OO_C$ defined by $v\in \Ext^1(F,\OO_C)$ which, by lemma \ref{LEM:FROMFTOE} is semistable and has sections, unless $v=0$. Denote by $\mV_0$ the zero section of the vector bundle $\mV$, i.e. the locus parametrizing trivial extensions by $\OO_C$. Then $\mV\setminus \mV_0$ parametrize a family of semistable extensions of elements in $\U(r-1,r)$ by $\OO_C$. This implies that the map sending $v\in \mV\setminus\mV_0$ to $[E_v]$ is a morphism. Moreover this induces a morphism 
$$ \Phi:\PP(\mV) \to \Theta_r$$
sending $[v]\in\PP(\Ext^1(F,\OO_C))$ to $[E_v]$.  
\vspace{2mm}

\noindent Note that we have: 
$$\dim {\mathbb P}(\mV) = \dim \U(r-1,r) + 2r-2= (r-1)^2 + 1 + 2r-2 = r^2 = \dim \Theta_r.$$
Moreover, by lemma \ref{LEM:FROMETOF}, $\Phi$ is dominant so we can conclude that $\Phi$ is a generically finite morphism onto $\Theta_r$. 

\noindent In order to conclude the proof it is enough to produce an open subset $U \subset \Theta_r$  such that 
the restriction
$$\Phi\vert_{{\Phi}^{-1}(U) } \colon {\Phi}^{-1}(U) \to U$$
has degree $1$. Let $U$ be the open subset of $\Theta_r$ given by the stable classes $[E]$ with $h^0(E)=1$. Now, consider $[v_1],[v_2]\in\Phi^{-1}(U)$ and assume that $\Phi([v_1])=\Phi([v_2])=[E]$. As $h^0(E)=1$ we have that $\pi([v_1])=\pi([v_2])=[F]$ and we have a commutative diagram
$$
\xymatrix{
0 \ar[r] & \OO_C \ar[d]_{\id}\ar[r]^-{s_1} & E \ar[r]\ar[d]_{\lambda\id} & F \ar[d]_{\lambda \id} \ar[r] & 0 \\
0 \ar[r] & \OO_C \ar[r]_-{\lambda s_1} & E \ar[r] & F \ar[r] & 0 
}$$
with $\lambda\in \C^*$. But this implies that the class of the extensions are multiples so we have $[v_1]=[v_2]$ and the degree is $1$.
\end{proof}

\begin{remark}
\label{REM:NEEDG2}
\rm
\noindent We want to stress the importance of the assumption on the genus of $C$ in the Theorem. Assume that $C$ is a curve  of genus $g\geq 3$. Then one can also study extensions of  a stable vector bundle $F \in {\mathcal U}_C(r-1,r(g-1))$ by $O_C$. In order to get a projective bundle $\PP(\mathcal{V})$ parametrizing all extensions, as in theorem \ref{THM:RIS1}, we need  the existence of a  Poincaré  vector bundle $\mathcal{P}$ on the moduli space $\U(r-1,r(g-1))$. This actually exists if and only if $r-1$ and $r(g-1)$ are coprime, see \cite{Ram73} (notice that this is always true if $g=2$ and $r\geq 3$). Nevertheless, also under this further assumption, we can find extensions of $F$ by $\OO_C$ which are unstable, hence the map $\Phi$ fails to be a morphism.
\end{remark}
\hfill\par

\noindent In the proof of Theorem \ref{THM:RIS1} we have seen that the fiber of $\Phi$ over a stable point $[E]$ with $h^0(E)=1$ is a single point. For stable points it is possible to say something similar:

\begin{lemma}
Let $[E] \in \Theta_r^s$, there is a bijective morphism  
$$ \nu \colon  \PP(H^0(E)) \to {\Phi}^{-1}(E).$$
\end{lemma}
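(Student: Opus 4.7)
The natural candidate is
$$
\nu([s]) := [v_s]\in \PP(\mathcal{V}), \qquad v_s\in \Ext^1(F_s,\OO_C),
$$
where for $s\in H^0(E)\setminus \{0\}$ the map $i_s:\OO_C\hookrightarrow E$, $1\mapsto s$, is a vector bundle inclusion (stability of $E$ with $\mu(E)=1>0$ forces $s$ to be nowhere vanishing), $F_s:=E/i_s(\OO_C)$ is stable of rank $r-1$ and degree $r$ by Lemma \ref{LEM:FROMETOF}, and $v_s$ is the extension class of the resulting short exact sequence. The class $v_s$ is nonzero because $E$ is stable while $\OO_C\oplus F_s$ is not, and from $i_{\lambda s}=\lambda i_s$ one gets $v_{\lambda s}=\lambda v_s$, so $[v_s]\in \PP(\mathcal{V})$ is well defined and $\Phi([v_s])=[E]$ by construction.

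To upgrade $\nu$ to a morphism, one may assume $h^0(E)\geq 2$ (the case $h^0(E)=1$ being the one already treated in the proof of Theorem \ref{THM:RIS1}) and set $S:=H^0(E)\setminus \{0\}$. The tautological element of $H^0(E)\otimes H^0(E)^\vee \subset H^0(S,\OO_S)\otimes H^0(C,E)=H^0(S\times C,p_C^*E)$ gives a section $\sigma:\OO_{S\times C}\to p_C^*E$ whose restriction to $\{s\}\times C$ is $i_s$. Fiberwise nonvanishing, $\sigma$ fits into a short exact sequence
$$
0\to \OO_{S\times C}\to p_C^*E\to \mathcal{F}\to 0
$$
with $\mathcal{F}$ a family of stable bundles, and induces a classifying morphism $\phi:S\to \U(r-1,r)$. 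Since $\codim (\{0\}\subset H^0(E))\geq 2$, we have $\Pic(S)=0$, hence $\mathcal{F}\simeq (\phi\times \id_C)^*\mathcal{P}$ for the Poincar\'e bundle $\mathcal{P}$. Applying $R^1 {p_S}_*\mHom(-,\OO_{S\times C})$ to the sequence then realizes $\{v_s\}_{s\in S}$ as a nowhere vanishing section of $\phi^*\mathcal{V}$, i.e.\ a lift $\tilde\phi:S\to \mathcal{V}\setminus \mathcal{V}_0$. Composing with the projection to $\PP(\mathcal{V})$ and invoking the $\C^*$-equivariance noted above yields the desired morphism $\nu:\PP(H^0(E))\to \PP(\mathcal{V})$, which factors through $\Phi^{-1}([E])$ by construction.

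For bijectivity: if $\nu([s])=\nu([s'])$, one finds an isomorphism $\psi:F_s\xrightarrow{\sim} F_{s'}$ and $\lambda\in \C^*$ with $\psi^*v_{s'}=\lambda v_s$; the five lemma then produces $\beta:E\to E$ fitting into a morphism between the two short exact sequences. Stability of $E$ forces $\beta\in \C^*\cdot \id$, whence $i_{s'}(\OO_C)=i_s(\OO_C)$ and $[s]=[s']$. For surjectivity, given $[v]\in \Phi^{-1}([E])$ any isomorphism $E_v\xrightarrow{\sim} E$ transports the canonical inclusion $\OO_C\hookrightarrow E_v$ into a nonzero section $s\in H^0(E)$ with $\nu([s])=[v]$.

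The hard part is the morphism step: one has to realize the family $\{v_s\}$ of extension classes as a global section of a vector bundle pulled back from $\U(r-1,r)$, which requires comparing the universal quotient $\mathcal{F}$ with $(\phi\times \id_C)^*\mathcal{P}$. The vanishing $\Pic(S)=0$ rules out the line bundle twist from $S$ that would otherwise obstruct this identification, and the instability of $\OO_C\oplus F_s$ ensures the resulting section does not vanish, so that the projection to $\PP(\mathcal{V})$ makes sense everywhere on $S$.
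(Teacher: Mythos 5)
Your proposal is correct and follows essentially the same route as the paper: the same candidate map $[s]\mapsto[v_s]$, the same use of stability of $E$ for injectivity, and the same key idea of algebraizing $s\mapsto v_s$ over a family to get a morphism. The only difference is in implementation of that last step — you work on the cone $H^0(E)\setminus\{0\}$ with the tautological section, the connecting map of $R^\bullet p_{S*}$, and $\Pic(S)=0$, whereas the paper works directly on $\PP(H^0(E))$ and exhibits $\langle v_s\rangle$ as the kernel of a bundle map $\sigma^*\mathcal{V}\to\mathcal{G}$; both are standard and equivalent.
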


\begin{proof}
Let $s \in H^0(E)$ be a non zero global section of $E$. As in the proof of  lemma \ref{LEM:FROMETOF}, $s$ induces an exact sequence of vector bundles:
\begin{equation}
\label{s}
0 \to \OO_C \to E \to F_s \to 0,
\end{equation}
where $F_s$ is stable, $[F_s] \in \U(r-1,r)$ and $E$ is the extension of $F_s$ by a non zero vector $v_s \in \Ext^1(F_s,\OO_C)$.
By tensoring \ref{s} with $F_s^*$ and taking cohomology, since $h^0(F_s^*) = h^0(F_s^* \otimes E) = 0$,  we get:
\begin{equation}
\label{ss}
0 \to H^0(F_s^* \otimes F_s) \overset{\delta}{\to} H^1(F_s^*) \overset{\lambda_s}{\to}   H^1( F_s^* \otimes E) \to H^1(F_s^* \otimes F_s) \to 0,
\end{equation}
from which we see that  
$\langle v_s\rangle$ is  the kernel of $\lambda_s$.
\vspace{2mm}

\noindent So we have a natural map:
$$H^0(E) \setminus \{ 0 \}  \to  {\mathbb P}(\mV) $$
sending a non zero global section $s \in H^0(E)$ to $[v_s]$. 
Let $s$ and $s'$ be non zero global sections such that $s' = \lambda s$,  with $\lambda \in C^*$.  As in the proof of Theorem \ref{THM:RIS1}, it turns out that $v_{s'} = \lambda v_s$ in $\Ext^1(F,\OO_C)$.   
So we have a map:
$$\nu \colon {\mathbb P}(H^0(E)) \to {\mathbb P}(\mV) $$ 
sending $[s] \to [v_s]$, whose image is  actually ${\Phi} ^{-1}(E)$.
\vspace{2mm}

\noindent We claim that this map is   a morphism. Let $\PP^n = {\mathbb P}(H^0(E))$, with $n \geq 1$,  one can prove that  there exists a vector bundle ${\mathcal Q}$ on
${\mathbb P}^n \times C$  of rank $r-1$ such that ${\mathcal Q}\vert_{ [s] \times C} \simeq F_s$. Hence we have a morphism 
$\sigma \colon {\mathbb P}^n \to \U(r-1,r)$, sending $[s] \to [F_s]$,  and a vector bundle ${\sigma}^* \mV$ on ${\mathbb P}^n$.  Finally, 
there exists a vector bundle ${\mathcal G}$  on ${\mathbb P}^n$  with ${\mathcal G}_{[s]} = H^1(F_s^* \otimes E)$ and  a map of vector bundles:
$$\lambda \colon {\sigma}^*(\mV) \to {\mathcal G},$$
 where   ${\lambda}_{[s]}$ is the map  appearing in \ref{ss}. Since  $\langle v_s\rangle = \ker {\lambda}_s$, this implies the claim.   
\vspace{2mm}

\noindent To conclude the proof,   we show  that $\nu$ is injective. 
Let $s_1$ and $s_2$ be  global sections and assume 
that $[v_{s_1}]=[v_{s_2}]$. Then $s_1$ and $s_2$ defines two exact sequences which give two extensions which are multiples of each other. Then, there exists an isomorphism $\sigma$ of $E$ such that the diagram
$$
\xymatrix{
0 \ar[r] & \OO_C \ar[d]_{\id}\ar[r]^-{s_1} & E \ar[r]\ar[d]_{\sigma} & F \ar[d]_{\lambda \id} \ar[r] & 0 \\
0 \ar[r] & \OO_C \ar[r]_-{s_2} & E \ar[r] & F \ar[r] & 0 
}$$
is commutative. But $E$ is stable, so $\sigma=\lambda\id$. Then, clearly, $\sigma_1=\lambda\sigma_2$. 
\hfill\par
\end{proof}

\noindent Let  $L \in \Pic^r(C)$ and $\SU(r-1,L)$ be the  moduli space of stable vector bundles with determinant $L$. As we have seen, $\SU(r-1,L)$ can be seen as a subvariety of 
$\U(r-1,r)$. Let $\mV$ be the vector bundle on $\U(r-1,r)$ defined in the proof of Theorem \ref{THM:RIS1}.
Let $\mV_L$ denote the restriction of $\mV$ to $\SU(r-1,L)$. We will denote with $\pi:\PP(\mV_L)\rightarrow \SU(r-1,L)$ the projection map. Then, with the same arguments of the proof of Theorem \ref{THM:RIS1} we have the following:

\begin{corollary}
\label{cor1}
Fix $L \in \Pic^r(C)$. The map 
$$ \Phi_L :\PP(\mV_L) \rightarrow \Theta_{r,L}$$
sending $[v]$ to the extension $[E_v]$ of $\pi([v])$ by $\OO_C$ defined by $v$, is a birational morphism. 
\end{corollary}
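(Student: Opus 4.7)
The plan is to derive this as a direct restriction of Theorem \ref{THM:RIS1}. Since $\SU(r-1,L) \subset \U(r-1,r)$ is the closed fiber of the determinant morphism over $L$, the bundle $\mV_L$ is by definition the pullback of $\mV$ to $\SU(r-1,L)$, and hence $\PP(\mV_L) = \pi^{-1}(\SU(r-1,L))$ is a closed subscheme of $\PP(\mV)$. The map $\Phi_L$ coincides with the restriction $\Phi|_{\PP(\mV_L)}$ and is therefore automatically a morphism. Its image lies in $\Theta_{r,L}$: an extension $0\to \OO_C \to E_v \to F \to 0$ satisfies $\det E_v = \det F$, so $[F] \in \SU(r-1,L)$ forces $[E_v] \in \SU(r,L)$, and Lemma \ref{LEM:FROMFTOE} ensures that $[E_v]\in \Theta_r$.

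Next I would match dimensions and verify dominance. Using $g=2$ one has $\dim \SU(r-1,L) = (r-1)^2 - 1$ and the fibers of $\pi$ have dimension $2r-2$, so $\dim \PP(\mV_L) = (r-1)^2 - 1 + (2r-2) = r^2 - 2 = \dim \Theta_{r,L}$. Dominance is immediate from Lemma \ref{LEM:FROMETOF} with the determinant constraint: a stable $[E] \in \Theta_{r,L}$ fits in a sequence $0 \to \OO_C \to E \to F \to 0$ with $\det F = \det E = L$, so $[F] \in \SU(r-1,L)$ and $[E]$ is in the image of $\Phi_L$. Since $\Phi_L$ is a morphism between irreducible varieties of the same dimension with dense image, it is surjective and generically finite.

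For generic injectivity I would transcribe the last argument in the proof of Theorem \ref{THM:RIS1}. Let $U_L \subset \Theta_{r,L}$ be the open set of stable classes $[E]$ with $h^0(E) = 1$; then given $[v_1],[v_2] \in \Phi_L^{-1}(U_L)$ mapping to the same $[E]$, the unique section (up to scalar) forces $\pi([v_1])=\pi([v_2])=[F]$ and the same commutative diagram as in loc.\ cit.\ shows that $[v_1]=[v_2]$. The only substantive point, which I expect to be the main (mild) obstacle, is to verify that $U_L$ is nonempty: this follows from the extension of Riemann's singularity theorem referenced in the introduction (cf.\ \cite{Lz}), since the locus $\{h^0(E)\geq 2\}$ coincides with the singular locus of the integral divisor $\Theta_{r,L}^s$ and is therefore a proper closed subset, making its complement $U_L$ a dense open subset of $\Theta_{r,L}$.
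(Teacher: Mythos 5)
Your proposal is correct and follows essentially the same route as the paper, which proves Corollary \ref{cor1} simply by rerunning the arguments of Theorem \ref{THM:RIS1} after restricting $\mV$ to the fibre $\SU(r-1,L)$ of the determinant map: the determinant computation $\det E_v=\det F=L$, the dimension count $\dim\PP(\mV_L)=r^2-2=\dim\Theta_{r,L}$, dominance via Lemma \ref{LEM:FROMETOF}, and generic injectivity over the locus $h^0(E)=1$ are exactly the intended steps. Your added remark justifying that this locus is nonempty via Laszlo's multiplicity formula is a sensible (and correct) explicit check of a point the paper leaves implicit.
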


\noindent As $r$ and $r-1$ are coprime, we have that $\SU(r-1, L)$ is a rational variety, see \cite{New75,KS}. Hence, as a consequence  of our theorem we have also this interesting corollary: 
\begin{corollary}
\label{cor2}
For any $L \in \Pic^{r}(C)$, ${\Theta}_{r,L}$ is a rational subvariety of $\SU(r,L)$.
\end{corollary}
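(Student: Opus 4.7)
The plan is to combine Corollary \ref{cor1} with the cited rationality results for $\SU(r-1,L)$ and the trivial fact that a projective bundle over a rational variety is rational. Concretely, by Corollary \ref{cor1} the morphism $\Phi_L \colon \PP(\mathcal{V}_L) \to \Theta_{r,L}$ is birational, so it suffices to prove that the total space $\PP(\mathcal{V}_L)$ is rational.

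First I would invoke the quoted results of Newstead \cite{New75} and King--Schofield \cite{KS}: since $\gcd(r-1,r)=1$ (consecutive integers), the moduli space $\SU(r-1,L)$ of stable bundles of rank $r-1$ with fixed determinant $L$ of degree $r$ is rational. Next, the natural projection $\pi \colon \PP(\mathcal{V}_L) \to \SU(r-1,L)$ is a Zariski-locally trivial $\PP^{2r-2}$-bundle (as $\mathcal{V}_L$ is a vector bundle of rank $2r-1$ on a smooth variety, it is locally free in the Zariski topology, so $\PP(\mathcal{V}_L)$ is Zariski-locally isomorphic to $U \times \PP^{2r-2}$ over suitable opens $U \subset \SU(r-1,L)$). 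Therefore $\PP(\mathcal{V}_L)$ is birational to $\SU(r-1,L) \times \PP^{2r-2}$, and, the first factor being rational, the product is rational as well. Thus $\PP(\mathcal{V}_L)$ is rational.

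Combining with the birationality provided by $\Phi_L$, we conclude that $\Theta_{r,L}$ is birational to a rational variety, hence is itself rational. Since $\Theta_{r,L}$ sits inside $\SU(r,L)$ as a divisor by construction, this proves it is a rational subvariety of $\SU(r,L)$.

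There is no real obstacle here: the whole statement is an immediate consequence of Corollary \ref{cor1} once the rationality of $\SU(r-1,L)$ is taken from the literature. The only subtlety worth flagging is the coprimality condition $\gcd(r-1,r)=1$, which is automatic and is precisely what was already needed in Theorem \ref{THM:RIS1} to guarantee existence of the Poincaré bundle used to build $\mathcal{V}$; so the same numerical hypothesis that produces $\PP(\mathcal{V}_L)$ also furnishes the rationality of the base.
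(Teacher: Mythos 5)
Your proposal is correct and follows exactly the paper's (implicit) argument: the paper likewise deduces the corollary from Corollary \ref{cor1} together with the rationality of $\SU(r-1,L)$ cited from \cite{New75,KS} (valid since $\gcd(r-1,r)=1$), the projective bundle $\PP(\mV_L)$ over a rational base being rational. No discrepancies to report.
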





\section{General fibers of $\pi$ and $\theta$ map}
\label{SEC:3}
\noindent In this section, we would restrict the morphism $\Phi$ to extensions of a general vector bundle $[F] \in \U(r-1,r)$.
First of all  we will deduce some  properties of general elements of $\U(r-1,r)$. 
\vspace{2mm}

\noindent For any vector bundle $F$, let $\mc{M}_1(F^*)$ be the scheme  of maximal line subbundles of $F^*$. Note that, if $[F]\in \U(r-1,r)$, then maximal line subbundles of $F^*$ are exactly the line subbundles of degree $-2$.

 \begin{proposition}
\label{maximal}
 Let  $r \geq 3$,  a general $[F] \in \U(r-1,r)$ satisfies the following properties:
\begin{enumerate}
\item{} if $r \geq 4$, $F$ does not admit  maximal subbundles    of rank $ s \leq r-3$;
\item{}   $F$ admits finitely many  maximal subbundles   of rank $ r-2$;
\item{} we have ${\mathcal M}_{r-2}(F)   \simeq {\mathcal M}_1(F^*).$
 \end{enumerate}
 \end{proposition}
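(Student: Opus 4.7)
The plan is to combine a dimension count on the space of extensions with a duality argument for part (3). For (1), I would observe that any subbundle $S\subset F$ of rank $s$ and slope $1$ (hence degree $s$) produces an exact sequence $0\to S\to F\to Q\to 0$ with $Q$ locally free of rank $r-1-s$ and degree $r-s$. An elementary integer-arithmetic check, using the strict bound $\mu(S')<r/(r-1)$ for all subsheaves of the stable $F$, shows that both $S$ and $Q$ are automatically semistable as soon as $1\leq s\leq r-2$. Hence $[F]$ lies in the image of the classifying morphism
$$ \mathcal{E}_s \longrightarrow \U(r-1,r), \qquad ([S],[Q],[\xi]) \mapsto [E_\xi], $$
where $\mathcal{E}_s$ parametrizes triples with $[S]\in\U(s,s)$, $[Q]\in\U(r-1-s,r-s)$, $[\xi]\in\PP(\Ext^1(Q,S))$. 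For generic stable $S$ and $Q$, $\mu(Q)>\mu(S)$ gives $\Hom(Q,S)=0$, and Riemann--Roch yields $\ext^1(Q,S)=s(r-s)$. Summing,
$$ \dim\mathcal{E}_s = (s^2+1) + ((r-1-s)^2+1) + s(r-s) - 1 = \dim\U(r-1,r) - s(r-2-s). $$
When $1\leq s\leq r-3$ and $r\geq 4$, one has $s(r-2-s)\geq 1$, so the image is a proper closed subvariety of $\U(r-1,r)$ and a general $F$ admits no such subbundle.

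For (3), given $S\subset F$ of rank $r-2$ and degree $r-2$, the quotient $L:=F/S$ is a line bundle of degree $2$; dualizing the defining sequence produces $0\to L^*\to F^*\to S^*\to 0$, exhibiting $L^*$ as a line subbundle of $F^*$ of degree $-2$. The assignment $S\mapsto (F/S)^*$ is reversible (a saturated line subbundle $M\subset F^*$ of degree $-2$ dualizes to $F\twoheadrightarrow M^*$ whose kernel is a rank-$(r-2)$ slope-$1$ subbundle of $F$) and functorial, so it gives the scheme isomorphism $\mathcal{M}_{r-2}(F)\simeq \mathcal{M}_1(F^*)$. To conclude, I would verify that for general $F$ the maximum degree of a line subbundle of $F^*$ is indeed $-2$, which follows from the analogous Euler-characteristic count applied to rank-$1$ subbundles of $F^*$.

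For (2), the specialization $s=r-2$ makes the dimension count an equality $\dim\mathcal{E}_{r-2}=\dim\U(r-1,r)$. Finiteness of $\mathcal{M}_{r-2}(F)$ for general $F$ would then follow from the classifying map being dominant, since by equidimensionality the general fiber would be zero-dimensional. Dominance can be established either by invoking the Lange--Narasimhan theorem on maximal subbundles of a general bundle, or by exhibiting a single stable extension of a degree-$2$ line bundle by a stable rank-$(r-2)$ degree-$(r-2)$ bundle. The main obstacle is precisely this dominance assertion: the dimensional count alone is insufficient, and one needs to produce at least one stable extension, the only step that does not reduce to formal computation.
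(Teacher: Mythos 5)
Your strategy coincides with the paper's: a dimension count on the locus of bundles carrying a slope-one subbundle of rank $s$ for parts (1) and (2), and the duality $S\mapsto (F/S)^*$ for part (3). The only real difference is in execution: the paper simply quotes the formula $\dim T_s=(r-1)^2+1+s(s-r+2)$ from Lange--Newstead and Russo--Teixidor, whereas you re-derive it by parametrizing extensions $0\to S\to F\to Q\to 0$. Your derivation is sound — stability of $F$ does force both $S$ and $Q$ to be semistable, $\Hom(Q,S)=0$ since $\mu(Q)>\mu(S)$, and your total for $\dim\mc{E}_s$ agrees exactly with the cited formula — so it is a legitimate self-contained substitute for the references, which is a modest gain in transparency. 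Two comments. First, for part (2) as literally stated (finiteness), the dominance of $\mc{E}_{r-2}\to\U(r-1,r)$ that you single out as the main obstacle is not actually needed: the extension parameter space is irreducible of dimension equal to $\dim\U(r-1,r)$ (the fibre $\PP(\Ext^1(Q,S))$ has constant dimension $2(r-2)-1$ over $\U(r-2,r-2)\times\Pic^2(C)$), so either the classifying map is dominant with finite general fibre, or a general $F$ carries no slope-one subbundle of rank $r-2$ at all; both alternatives give finiteness. What dominance really buys is \emph{non-emptiness} of $\mc{M}_{r-2}(F)$, which is what part (3) and the later count $l(Z)=(r-1)^2$ require, and there both you and the paper must fall back on the literature on maximal subbundles (your ``Euler-characteristic count'' for the maximal degree of line subbundles of $F^*$ shows only the upper bound $-2$, which is forced by stability; existence of a degree $-2$ line subbundle is precisely the non-emptiness statement again). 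Second, your part (3) is the same canonical duality as the paper's; the paper additionally verifies $\Hom(F,Q)\simeq\C$ for general $F$ to control injectivity and the scheme structure, a point your sketch passes over but which is harmless for the set-theoretic bijection.
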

 \begin{proof}
For any $1 \leq s \leq r-2$ let's consider the following locus:
$$T_s = \{ [F] \in \U(r-1,r) \quad\vert\quad  \exists  S \hookrightarrow F \   \mbox{ with }   \deg(S) = \rk(S) = s \}.$$
The set $T_s$ is locally closed, irreducible of dimension
$$ \dim T_s = (r-1)^2 +1 + s(s-r+2),$$
see \cite{L-Ne}, \cite{R-T}.
If $r \geq 4$ and $s \leq r-3$, then $\dim T_s < \dim \U(r-1,r)$, which proves (1). 
\hfill\par\noindent
(2) 
 Let $r \geq 3$   and $s = r-2$.  Then actually $T_{r-2} = \U(r-1,r)$ and  a general $[F] \in \U(r-1,r)$ has finitely many maximal subbundles of rank $r-2$. See \cite{L-Ne}, \cite{R-T} for a proof in the general case and 
 \cite{L-N} for $r=3$, where actually the property actually holds for any $[F] \in \U(2,3)$. 
\hfill\par\noindent 
(3)   
Let  $[F ] \in \U(r-1,r)$ be a general element and $[S] \in {\mathcal M}_{r-2}(F)$, then $S$ is semistable and we have an exact sequence
$$ 0 \to S \to F \to Q \to 0$$
with $Q \in \Pic^2(C)$. Moreover $S$ and $Q$ are general in their moduli spaces as in \cite{L-Ne}. This implies that 
$\Hom(F,Q) \simeq {\mathbb C}$. In fact, 
by  taking the dual of the above sequence and  tensoring with $Q$ we obtain
$$0 \to Q^* \otimes Q \to F^* \otimes Q  \to S^* \otimes Q \to  0$$
and, passing to cohomology we get
$$ 0 \to H^0(O_C) \to  H^0( F^* \otimes Q )  \to H^0(S^* \otimes Q)  \to \cdots .$$
Since $S$ and $Q$ are general $h^0(S^* \otimes Q) = 0$ and we can conclude 
$$\Hom(F,Q) \simeq H^0(F^* \otimes Q) \simeq H^0(O_C) = \C.$$
We have a natural map 
$q \colon {\mathcal M}_{r-2}(F) \to {\mathcal M}_1(F^*)$
sending $S$ to $Q^*$. The map $q$ is surjective as any maximal line subbundle $Q^* \hookrightarrow F^*$ gives a surjective map
$\phi \colon F \to  Q$ whose kernel is a maximal subbundle $S$  of $F$. 
The map is also injective.  Indeed, assume that $[S_1]$ and $[S_2]$  are maximal subbundles such that $q(S_1)= q(S_2)= Q^*$.
Then $S_1 = \ker \phi_1$ and  $S_2 = \ker \phi_2$, with $\phi_i \in \Hom(F,Q) \simeq {\mathbb C}$.
This implies that  $\phi_2 = \rho \phi_1$, $\rho \in \C^*$, hence $S_1 \simeq S_2$. Moreover, note that the above construction works for any flat family of semistable maximal subbundles of $F$, hence  $q$ is a morphism. Finally, the same construction gives a morphism $q':\mathcal{M}_{1}(F^*)\to \mathcal{M}_{r-2}(F)$ which turns out to be the inverse of $q$. This concludes the proof of (3).
\end{proof}

\begin{lemma}
\label{LEM:EVsecant}
For any $r \geq 3$ and $[F] \in \U(r-1,r)$, let $\ev$ be the evaluation map of the secant bundle $\mF_2(F\otimes \omega_C)$. If $\mc{M}_1(F^*)$ is finite, then $\ev$ is generically surjective and its degeneracy locus $Z$ is the following:
$$ Z = \{ d\in C^{(2)} \,\vert\, \OO_C(-d) \in {\mathcal M}_1(F^*) \}.$$
Moreover, $Z \simeq {\mathcal M}_1(F^*)$ if and only if $h^0(F) =1$;  if $h^0(F) \geq 2$ then $Z = \mf{E} \cup Z'$, where ${\mf E}=|\omega_C|$ (see Section \ref{SEC:1}) and $Z'$ is a finite set. 
\end{lemma}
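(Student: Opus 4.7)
The plan is to analyse the cokernel of $\ev$ fibrewise. By Lemma \ref{LEM:COHF}, $h^1(F\otimes \omega_C) = 0$, so the exact sequence (\ref{EXSEQ:SECBUN2}) identifies $\Coker(\ev)$ with $R^1 r_{1*}\bigl(r_2^*(F\otimes \omega_C)(-\mI)\bigr)$. The restriction of $r_2^*(F\otimes \omega_C)(-\mI)$ to the fibre $\{d\}\times C$ is $F\otimes \omega_C(-d)$, so cohomology and base change together with semicontinuity show that $\ev$ is surjective at $d$ if and only if $H^1(F\otimes \omega_C(-d))=0$; by Serre duality this is the same as $H^0(F^*(d))=0$. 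Thus $Z$ is exactly the locus of $d\in C^{(2)}$ for which there is a nonzero sheaf map $\OO_C(-d)\to F^*$.

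Next I would upgrade such a sheaf injection to a subbundle inclusion. The slope of $F^*$ is $-r/(r-1)\in(-2,-1)$, so by stability of $F^*$ every line subbundle of $F^*$ has degree $\leq -2$. If a nonzero map $\OO_C(-d)\to F^*$ vanished at some $p\in C$, its saturation would be a line subbundle $\OO_C(-d+p)\hookrightarrow F^*$ of degree $-1$, contradicting the bound. Hence $\OO_C(-d)\hookrightarrow F^*$ is already a subbundle of the maximal degree $-2$, i.e.\ $\OO_C(-d)\in \mc{M}_1(F^*)$, proving the claimed description of $Z$. Generic surjectivity of $\ev$ is then automatic: under the hypothesis $\mc{M}_1(F^*)$ is finite, so $Z$ is contained in the preimage of finitely many points of $\Pic^{-2}(C)$ under (the composition of) the Abel map $C^{(2)}\to \Pic^2(C)$, and hence is a proper subvariety.

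Finally, to distinguish the two cases I would study the surjective morphism $\varphi\colon Z \to \mc{M}_1(F^*)$, $d\mapsto \OO_C(-d)$. Its fibre over $L\in \mc{M}_1(F^*)$ is the complete linear system $|L^{-1}|$, with $\deg L^{-1}=2$ on a genus $2$ curve; by Riemann--Roch, $h^0(L^{-1})=1$ unless $L^{-1}\simeq \omega_C$, in which case $|L^{-1}|=\mf{E}\simeq \PP^1$. So $\varphi$ is a bijection precisely when $\omega_C^{-1}\notin \mc{M}_1(F^*)$. The key cohomological translation is that $\omega_C^{-1}\in \mc{M}_1(F^*)$ is equivalent to $h^0(F^*\otimes \omega_C)\geq 1$, i.e.\ $h^1(F)\geq 1$; since $\chi(F)=1$, this is exactly $h^0(F)\geq 2$. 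When $h^0(F)=1$ all fibres of $\varphi$ are reduced points, giving $Z\simeq \mc{M}_1(F^*)$; when $h^0(F)\geq 2$, the fibre over $\omega_C^{-1}$ contributes $\mf{E}$ and the remaining finitely many fibres contribute a finite set $Z'$, giving $Z=\mf{E}\cup Z'$.

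I expect the main obstacle to be the saturation step in the second paragraph: one must argue that a mere sheaf map $\OO_C(-d)\to F^*$ in fact realizes $\OO_C(-d)$ as a genuine line subbundle of $F^*$ of maximal degree, rather than as a non-maximal subsheaf whose saturation could escape $\mc{M}_1(F^*)$. Once the slope inequality $-r/(r-1)<-1$ is used to rule this out, everything else is a careful bookkeeping of fibres of the Abel map and a small Riemann--Roch computation on $F$.
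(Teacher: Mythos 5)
Your proposal is correct and follows essentially the same route as the paper: identify $\Coker(\ev_d)$ with $H^1(F\otimes\omega_C(-d))\simeq \Hom(F,\OO_C(d))^*$, translate membership in $Z$ into the existence of a degree $-2$ line subbundle of $F^*$, and then analyse the fibres of the (dualized) Abel map, with the dichotomy governed by whether $\omega_C^{-1}\in\mc{M}_1(F^*)$, i.e.\ whether $h^0(F)\geq 2$. The saturation step you flag as the main obstacle is handled exactly by your slope argument (stability of $F^*$ forces line subbundles to have degree $\leq -2$, so a nonzero map $\OO_C(-d)\to F^*$ cannot vanish anywhere); the paper uses this implicitly via its earlier remark that maximal line subbundles of $F^*$ are precisely those of degree $-2$.
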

\begin{proof}
As we have seen in section \ref{SEC:1}, ${\mathcal F}_2(F \otimes {\omega}_C)$ is a vector bundle of rank $2r-2$ on $C^{(2)}$ and 
$H^0( C^{(2)}, {\mathcal F}_2(F \otimes {\omega}_C)) \simeq H^0(C, F \otimes {\omega}_C)$. Recall that the evaluation map of the secant bundle of $F\otimes \omega_C$ is the map
$$\ev \colon H^0(F \otimes {\omega}_C) \otimes O_{C^{(2)}} \to {\mathcal F}_2(F \otimes {\omega}_C)$$
and is such that, for any  $d \in C^{(2)}$, $ev_d$ can be identified with the restriction map
$$ H^0(F \otimes {\omega}_C)  \to (F\otimes\omega_C)_d, \quad 
s \to s\vert_{ d}.$$
Observe that 
\begin{equation}
\label{EQ:HOMH1}
H^1(F \otimes {\omega}_C(-d)) \simeq H^0(F^* \otimes O_C(d))^* \simeq \Hom(F,O_C(d))^*.
\end{equation}
Note that for any $d\in C^{(2)}$ we have: 
$$\Coker(\ev_d) \simeq H^1(F \otimes {\omega}_C(-d)),$$
hence $\ev_d$ is not surjective if and only if $\Hom(F,O_C(d)) \not= 0$, that is $O_C(-d)$ is a maximal line subbundle of $F^*$.
If $F$ has finitely many maximal line subbundles  we can conclude that $\ev$ is generically surjective  and its degeneracy locus is the following: 
$$Z = \{ d \in C^{(2)} \,\vert\, \Rk(\ev_d) < 2r-2\} = 
\{ d \in C^{(2)} \,\vert\, O_C(-d) \in {\mathcal M}_1(F^*) \}.$$
Let $a \colon C^{(2)} \to \Pic^{-2}(C)$ be the map sending $d \to O_C(-d)$, $a$ is the composition of $A \colon C^{(2)} \to \Pic^{2}(C)$ sending $d$ to $\OO_C(d)$ with the isomorphism 
$\sigma \colon \Pic^{2}(C) \to \Pic^{-2}(C)$ sending $Q \to Q^*$.  Then $Z = a^{-1}({\mathcal M}_1(F^*))$. Note that 
\begin{equation}
Z \simeq {\mathcal  M}_1(F^*)\Longleftrightarrow \omega_C^{-1} \not\in{\mathcal M}_1(F^*) \Longleftrightarrow h^0(F)= 1.
\end{equation}
If $h^0(F) \geq 2$, then $ {\mf E} = \vert {\omega}_C \vert  \subset Z$ and this concludes the proof.
\end{proof}

\begin{remark}
Under the hypothesis of Lemma \ref{LEM:EVsecant}, the evaluation map fit into an exact sequence
\begin{equation}
\label{exact3}
 0 \to M \to H^0(F \otimes {\omega}_C) \otimes O_{C^{(2)}} \to {\mathcal F}_2(F \otimes {\omega}_C) \to T \to 0,
\end{equation}
where $M$ is a line bundle and $\Supp(T) = Z$. 
\end{remark}

\begin{remark} 
\rm  Let $[F] \in\U(r-1,r)$ be a general vector bundle, by proposition \ref{maximal},  $ {\mathcal M}_1(F^*) \simeq {\mathcal M}_{r-2}(F)$ is a finite set,  
 moreover $\Hom(F,O_C(d)) \simeq {\mathbb C}$ when $ \OO_C(-d) \in {\mathcal M}_1(F^*)$.  Finally, $[F]$ being general, we have $h^0(F)=1$ and this implies 
$$Z \simeq {\mathcal M}_1(F^*).$$
Taking the dual sequence of \ref{exact3} we have:
$$ 0 \to { {\mathcal F}_2(F \otimes {\omega}_C)}^* \to  H^0(F \otimes {\omega}_C)^* \otimes O_{C^{(2)}} \to M^* \otimes J_Z \to 0,$$
and computing Chern classes we obtain:
$$ c_1(M^*) = c_1( {\mathcal F}_2(F \otimes {\omega}_C)),$$
$$ c_1({\mathcal F}_2(F \otimes {\omega}_C)^*) c_1(M^*) + c_2({\mathcal F}_2(F \otimes {\omega}_C)^*) + l(Z) = 0,$$
from which we deduce:
$$ l(Z) = c_1({\mathcal F}_2(F \otimes {\omega}_C))^2 - c_2({\mathcal F}_2(F \otimes {\omega}_C)).$$
We have:
$$c_1({\mathcal F}_2(F \otimes {\omega}_C)) = x + (r-1)\theta, \quad  
c_2({\mathcal F}_2(F \otimes {\omega}_C)) = r^2 + 2r -2,$$
so we obtain:
$$ l(Z) = (r-1)^2.$$
This gives  the cardinality of ${\mathcal M}_{r-2}(F)$ and of ${\mathcal M}_1(F^*)$. This formula actually holds also for $F \in \U(r,d)$,  see \cite{G,L} for $r= 3$ and \cite{O-T,O} for $r\geq 4$. 
\end{remark}


\noindent The stability properties of the secant bundles,   on the two-symmetric product of a curve, allow us to prove the following.

\begin{proposition}
\label{thetadivisor}
Let $r \geq 3$ and $[F] \in \U(r-1,r)$ with $h^0(F) \leq 2$.  
If $\mc{M}_1(F^*)$ is finite, then every non trivial extension of $F$ by $\OO_C$  gives a vector bundle which admits theta divisor.
\end{proposition}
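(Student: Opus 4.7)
The plan is to prove that for every non-zero $v \in \Ext^1(F,\OO_C)$, the associated extension $E_v$ admits theta divisor. I will focus on the case where $E_v$ is stable (so $gr(E_v) = E_v$); the strictly semistable case should follow from the same kind of analysis applied to the Jordan-H\"older factors. Finding $N \in \Pic^0(C)$ with $h^0(E_v \otimes N) = 0$ then yields the claim. Tensoring the defining sequence $0 \to \OO_C \to E_v \to F \to 0$ by such $N$ with $h^0(N) = 0$ gives
\[
h^0(E_v \otimes N) = \dim \ker\bigl(\delta_N \colon H^0(F \otimes N) \to H^1(N)\bigr),
\]
so the task becomes producing $N$ for which $\delta_N$ is injective. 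Since $g = 2$, a generic $N$ can be written uniquely as $N = \omega_C(-d)$ with $d \in C^{(2)} \setminus \mf{E}$, and if moreover $d$ lies outside the degeneracy locus $Z$ of the evaluation map of $\mF_2(F\otimes\omega_C)$ described in Lemma~\ref{LEM:EVsecant}, then $h^0(F\otimes N) = 1$ and $\delta_N$ is a map between one-dimensional spaces.

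The first key step is the identification of $\delta_N$: by naturality of the cup product and Serre duality $H^1(F^*) \simeq H^0(F\otimes\omega_C)^*$, the map $\delta_N$ coincides with the restriction of the linear form $v$ to the subspace $M_d := H^0(F\otimes\omega_C(-d)) \subset H^0(F\otimes\omega_C)$. Indeed, for generic $d$ the one-dimensional space $H^0(\OO_C(d)) = H^0(N^{-1}\omega_C)$ is generated by the section cutting out $d$, so multiplication identifies $s \in M_d$ with its image in $H^0(F\otimes\omega_C)$ and the cup-product pairing $\delta_N(s)\cup t$ reduces to the Serre duality evaluation $v(s)$. The statement is thus reduced to the spanning claim that, as $d$ varies, the lines $M_d$ span $H^0(F\otimes\omega_C)$; equivalently, the natural map $\Ext^1(F,\OO_C) = H^0(F\otimes\omega_C)^* \to H^0(M^*)$ is injective, where $M$ is the line bundle appearing as the kernel in the evaluation sequence
\[
0 \to M \to H^0(F\otimes\omega_C) \otimes \OO_{C^{(2)}} \xrightarrow{\ev} \mF_2(F\otimes\omega_C) \to T \to 0.
\]

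To establish this injectivity, the plan is to dualize the evaluation sequence and take global sections. The kernel of $H^0(F\otimes\omega_C)^* \otimes \OO \to M^*$ fits into a short exact sequence $0 \to \mF_2(F\otimes\omega_C)^* \to K \to \mathcal{E}xt^1(T,\OO) \to 0$. Since $F \otimes \omega_C$ is stable with degree $3r-2 > r-1 = \rk(F\otimes\omega_C)$, Proposition~\ref{PROP:STABSECB} yields the $x$-stability of $\mF_2(F\otimes\omega_C)$, whose dual then has negative $x$-slope and no global sections. For the $\mathcal{E}xt^1(T,\OO)$ piece: when $h^0(F) = 1$ the sheaf $T$ is a skyscraper on the finite set $Z \simeq \mc{M}_1(F^*)$ and $\mathcal{E}xt^1(T,\OO) = 0$ automatically; when $h^0(F) = 2$ a divisorial component of $T$ of rank one along $\mf{E}$ appears, and a direct computation using $N_{\mf{E}/C^{(2)}} \simeq \OO_{\PP^1}(-1)$ still gives $H^0(\mathcal{E}xt^1(T,\OO)) = 0$. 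This last step, the handling of the divisorial part of $T$ in the case $h^0(F) = 2$, is the main obstacle, and the hypothesis $h^0(F) \leq 2$ is precisely what caps the rank of $T|_{\mf{E}}$ at one so that the calculation goes through.
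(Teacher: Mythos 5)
Your strategy runs parallel to the paper's up to and including the key reduction: both arguments come down to showing that the lines $M_d=H^0(F\otimes\omega_C(-d))$ span $H^0(F\otimes\omega_C)$, equivalently that $\Hom(\im(\ev),\OO_{C^{(2)}})=0$, and both invoke the Basu--Dan stability of $\mF_2(F\otimes\omega_C)$ at the decisive moment. Your identification of $\delta_N$ with $v|_{M_d}$ is correct, and your treatment of the case $h^0(F)=1$ is complete and arguably cleaner than the paper's. The gap is exactly the step you flag as the main obstacle: when $h^0(F)=2$ the computation does \emph{not} give $H^0(\mathcal{E}xt^1(T,\OO_{C^{(2)}}))=0$. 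Restricting the evaluation sequence to $\mf{E}\times C$ and using $\OO_{C^{(2)}\times C}(\mI)|_{\mf{E}\times C}\simeq \OO_{\PP^1}(1)\boxtimes\omega_C$, one finds
$$T|_{\mf{E}}\;\simeq\;H^1(F)\otimes\OO_{\PP^1}(-1)\;\simeq\;\OO_{\PP^1}(-1),$$
which is also what the Chern class bookkeeping forces ($c_1(T)=\mf{E}$ and $ch_2(T)=r^2-2r-\tfrac12$ leave degree $-1$ on $\mf{E}$ after the $(r-1)^2-1$ residual points are accounted for). Hence $\mathcal{E}xt^1(T,\OO_{C^{(2)}})$ contains $\iota_*\bigl((T|_{\mf{E}})^{\vee}\otimes N_{\mf{E}}\bigr)\simeq\iota_*\OO_{\PP^1}$, which has a one-dimensional space of global sections. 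Your sufficient condition therefore fails, and to salvage the argument you would have to prove that the connecting map $H^0(\mathcal{E}xt^1(T,\OO))\to H^1(\mF_2(F\otimes\omega_C)^{\vee})$ is injective --- a genuinely new statement that the sketch does not address.

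The paper circumvents this by not trying to kill $\Hom(\im(\ev),\OO)$ sheaf-theoretically: it fixes the putative nonzero form $v$, passes to the hyperplane $V=\ker(v)\subset H^0(F\otimes\omega_C)$, observes that $\ker(\evv)$ has rank one because $M_d\subset V$ for generic $d$, and then shows that $\im(\evv)$ is a rank $2r-3$ subsheaf of $\mF_2(F\otimes\omega_C)$ of $x$-slope at least $(2r-2)/(2r-3)>(2r-1)/(2r-2)$, contradicting stability; the contribution of $\mf{E}$ to $c_1(\ker(\evv))$ is absorbed there by a two-case analysis and only strengthens the contradiction. You should adopt that route (or an equivalent analysis of the connecting map) for $h^0(F)=2$. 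A secondary point: since membership in $\Theta_r$ is defined through $gr(E_v)$, producing $N$ with $h^0(E_v\otimes N)=0$ does not by itself settle the strictly semistable case, because $h^0(E_v\otimes N)=0$ does not imply $h^0(gr(E_v)\otimes N)=0$; the contradiction-style formulation, which only uses the inequality in the harmless direction, sidesteps this as well.
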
 
\begin{proof}
Let $E$ be an  extension of $F$ by $\OO_C$ which does not admit a theta divisor. Hence
$$ 0 \to \OO_C \to E \to F \to 0,$$
and, by tensoring with ${\omega}_C$ we obtain
\begin{equation}
\label{E}
 0 \to {\omega}_C \to {\tilde E} \xrightarrow{\psi}  {\tilde F} \to 0, 
\end{equation}
where, to simplify the notations, we have set ${\tilde E} = E \otimes {\omega}_C$ and ${\tilde F} = F \otimes {\omega}_C$. 
Note that $\tilde E$ does not admit theta divisor too, hence 
$$ \{ l \in \Pic^{-2}(C) \vert h^0({\tilde E} \otimes l) \geq 1 \} = \Pic^{-2}(C).$$
This implies that $\forall d \in C^{(2)}$ we have $h^0(\tilde{E} \otimes O_C(-d)) \geq 1$ too.
Let's consider the cohomology exact sequence induced by the exact sequence (\ref{E})
$$0 \to H^0({\omega}_C) \to H^0(\tilde{E}) \xrightarrow{\psi_0} H^0({\tilde F}) \to H^1({\omega}_C) \to 0, $$
where we have used $h^1({\tilde E}) = 0$ as $\mu({\tilde E}) = 3 \geq 2$.
Let's consider the  subspace of $H^0({\tilde F})$ given by the image of $\psi_0$, i.e. 
$$V=\psi_0(H^0(\tilde{E})).$$ 
In particular $\dim V = h^0({\tilde F}) -1 = 2r-2$ so $V$ is an hyperplane.
\vspace{2mm}

\noindent {\bf Claim}:  For any $d \in C^{(2)}\setminus \mathfrak{E}$ we have $V \cap H^0({\tilde F} \otimes O_C(-d)) \not= 0$.
\vspace{2mm}

\noindent In fact, by tensoring the exact sequence (\ref{E})  with $O_C(-d)$ we have:
$$ 0 \to {\omega}_C \otimes O_C(-d)  \to {\tilde E} \otimes O_C(-d) \to  {\tilde F} \otimes O_C(-d)  \to 0,$$
for a   general $d \in C^{(2)}$, then passing to cohomology we obtain the inclusion:
$$ 0 \to H^0({\tilde E} \otimes O_C(-d)) \to H^0({\tilde F} \otimes O_C(-d)),$$
which implies the claim since $h^0({\tilde E} \otimes O_C(-d)) \not= 0$.
\hfill\par\noindent
Let $\ev\colon  H^0(\tilde{F}) \otimes O_{C^{(2)}} \to {\mathcal F}_2({\tilde F})$ be the evaluation map of the secant bundle associated to 
${\tilde F}$ and consider its restriction to $V \otimes O_{C^{(2)}}$.
We have a diagramm as follows:
\begin{equation}
\label{DIAG:BIG}
\xymatrix{
0 \ar[r] &
	\ker(\evv) \ar@{^{(}->}[r]\ar@{^{(}->}[d]  &
    V\otimes \OO_{C^{(2)}} \ar@{->>}[r]^-{\evv}\ar@{^{(}->}[d] & 
    \im(\evv) \ar[r]\ar@{^{(}->}[d] &
    0 \\
0 \ar[r] & 
	M \ar@{^{(}->}[r]\ar[d] &
    H^0(\tilde{F})\otimes \OO_{C^{(2)}} \ar[r]_-{\ev} &
    \mF_2(\tilde{F}) \ar[r] & 
    T \ar@{->>}[r] &
    0 \\
 &
	Q
}\end{equation} 
where $M$ is a line bundle, T has support on $Z$ as  in Lemma \ref{LEM:EVsecant}.
For any $d\in C^{(2)}$ we have that the stalk of $\ker(\evv)$ at $d$ is
$$\Ker(\evv)_d=\ker\left((\evv)_d:V\otimes\OO_d\rightarrow \mF_2(\tilde{F})_d\right)=H^0(\tilde{F}\otimes \OO_C(-d))\cap V.$$
Notice that, as a consequence of the claim, 
$$\dim\left( H^0(\tilde{F}\otimes \OO_C(-d))\cap V\right)\geq 1$$ for any non canonical divisor $d$. Hence $\Ker(\evv)$ is a torsion free sheaf of rank $1$. 
For all $d\in C^{(2)}\setminus Z$ we have $h^0(\tilde{F}\otimes \OO_C(-d))=1$, hence, for these points, we have
$$\ker(\evv)_d=H^0(\tilde{F}\otimes \OO_C(-d)).$$
In particular, as $M$ and $\ker(\evv)$ coincide outside $Z$, we have that the support of $Q$ is cointained in $Z$.
\vspace{2mm}

\noindent In order to conclude the proof we will use the stability property of the secant bundle. With this aim, recall that, as seen in \ref{LEM:EVsecant},  $c_1(\mc{F}(\tilde{F}))=x+(r-1)\theta$ and thus, $c_1(\mc{F}(\tilde{F}))\cdot x=2r-1$. In particular, if $H$ is an ample divisor with numerical class $x$ we have
\begin{equation}
\mu_H(\mc{F}(\tilde{F}))=\frac{2r-1}{2r-2}.
\end{equation}
We will distinguish two cases depending on the value of $h^1(F)$.
\vspace{2mm}

\noindent {\bf Assume that  $h^0(F) = 1$}. In this case $Z \simeq {\mathcal M}_1(F^*)$ is a finite set (see Lemma \ref{LEM:EVsecant}). 
The support of $T$ is finite too so we have 
$$c_1(\im(\evv))=-c_1(\ker(\evv))=-c_1(M)=c_1(\mF_2(\tilde{F})).$$
Hence, we can conclude that $\im(\evv)$ is a proper subsheaf of the secant bundle with rank $2r-3$ and with the same first Chern class. Hence
\begin{equation}
\mu_H(\im(\evv))=\dfrac{c_1(\im(\evv))\cdot x }{2r-3}=\dfrac{x\cdot(x+(r-1)\theta)}{2r-3}=\dfrac{2r-1}{2r-3}
\end{equation}
but this contraddicts Proposition \ref{PROP:STABSECB}. This conclude this case.
\vspace{2mm}

\noindent {\bf Assume that  $h^0(F) = 2 $}. In this case $Z=\mf{E}\cup Z'$ with $Z'$ of dimension $0$ by  Lemma \ref{LEM:EVsecant}. Recall that the numerical class of $\mf{E}$ in $C^{(2)}$ is $\theta-x$ (see Section \ref{SEC:1}).
Observe that $Supp(T) = {\mf E} \cup Z'$ and for any $d \in {\mf E}$ we have: $\dim T_d= 1$. 
From the exact sequence of the evaluation map of the secant bundle we obtain:
$$c_1(M) = {\mf E} - c_1(\mF_2({\tilde F})).$$
Since $Supp(Q) \subset Z$, we distinguish two cases depending on its dimension.
\vspace{2mm}

\noindent (a) If $\dim Supp(Q) = 0$, then we have
$$c_1(\im(\evv))=-c_1(\ker(\evv))=-c_1(M),$$
hence $c_1(\im(\evv)) = c_1(\mF_2(\tilde F))- \mf{E}$. Then
\begin{equation}
\mu_H(\im(\evv))=
\dfrac{x\cdot(2x+(r-2)\theta)}{2r-3}=\dfrac{2r-2}{2r-3}
\end{equation}
But this is impossible since the secant bundle is semistable by Proposition \ref{PROP:STABSECB}. 
\vspace{2mm}

\noindent (b) If $\dim Supp(Q) =1$, since $Supp(Q) \subset Z$ and ${\mf E}$ is  irreducible, then 
$Supp(Q) = \mf{E} \cup Z'$, with $Z'$ finite or empty.  Observe that for any $d \in \mf{E}$ we have:
$\dim Q_d = 1$. 
So we have
$$ c_1(\im(\evv))=-c_1(\ker(\evv))=-c_1(M) + {\mf E},$$
hence $c_1(\im(\evv)) = c_1(\mF_2(\tilde{F}))$ and we can conclude as above.
\end{proof}

\noindent Fix a line bundle $L = M^{\otimes r}$, with $M \in \Pic^1(C)$. 
Let $[F] \in \SU(r-1,L)$, we consider  the fibre of the projective bundle $ \pi \colon \PP(\mV) \to \U(r-1,r)$ at $[F]$:
$$\PP_F=\PP(\Ext^1(F,\OO_C)) = {\pi}^{-1}([F]) \simeq {\mathbb P}^{2r-2},$$  
and the restriction  of the morphism $\Phi$ to $\PP_F$:
\begin{equation}
\label{PhiF}
\Phi_F = \Phi|_{\PP_F} \colon \PP_F  \to \Theta_{r,L}.
\end{equation}
By Corollary \ref{cor1} 
the map 
$$ \Phi_L \colon \PP(\mV_L) \to \Theta_{r,L}$$
is a birational morphism. Then, there exists a non empty open subset $U \subset \Theta_{r,L}$ such that
$${\Phi_L}_{\vert {\Phi_L}^{-1}(U)} \colon {\Phi_L}^{-1}(U) \to U$$
is an isomorphism. Hence, for general $F \in \SU(r-1,L)$ the intersection 
$\Phi^{-1}(U) \cap \PP_F$ is a non empty open subset of $\PP_F$ and 
$$\Phi_F \colon \PP_F \to {\Theta}_{r,L}$$ is a birational morphism onto its image.
\vspace{2mm}

\noindent Recall that 
\begin{equation}
\xymatrix{
\SU(r,L)\ar@{-->}[r]^-{\theta} & |r\Theta_M|.
}
\end{equation}
is the rational map which sends $[E]$ to $\Theta_E$. Note that if $F$ is generic then, by Proposition \ref{thetadivisor}, we have that $\theta$ is defined in each element of $\im(\Phi_F)$ so it makes sense to study the composition of $\Phi_F$ with $\theta$ which is then a morphism:
$$
\xymatrix{
\PP_{F}\ar@/_1pc/@{->}[dr]_{\theta \circ \Phi_F} \ar[r]^-{\Phi_F} & \Theta_{r,L}\ar@{-->}[d]^-{\theta} \\
 & |r\Theta_M|
}$$

\noindent We have the following result:
\begin{theorem}
\label{THM:MAIN2}
For a general stable bundle $F \in \SU(r-1,L)$  the map
$$ \theta \circ \Phi_F \colon \PP_F \to \vert r \Theta_M \vert $$
is a linear embedding.
\end{theorem}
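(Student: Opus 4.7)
My plan is to exhibit $\theta \circ \Phi_F$ as the projectivization of a $\C$-linear injection
$$
\lambda\colon \Ext^1(F,\OO_C) \hookrightarrow H^0(\Pic^0(C),\mc{L}),
$$
with $\mc L=\OO(r\Theta_M)$ so that $|\mc L|=|r\Theta_M|$; linearity plus injectivity of $\lambda$ immediately give a linear embedding upon projectivization.

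\emph{Setup.} First I would fix $F$ sufficiently general in $\SU(r-1,L)$, so that $h^0(F)=\chi(F)=1$ and, by Proposition \ref{maximal}, $\mc{M}_1(F^*)$ is finite. Then the hypotheses of Proposition \ref{thetadivisor} hold and for every $0\neq v\in \Ext^1(F,\OO_C)$ the bundle $E_v$ admits a theta divisor, so $\theta\circ\Phi_F$ is a morphism defined on all of $\PP_F$.

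\emph{Linearity.} For a generic $N\in\Pic^0(C)$ one has $h^0(N)=0$ and $h^0(F\otimes N)=h^1(N)=1$. Tensoring $0\to \OO_C\to E_v\to F\to 0$ with $N$ and taking cohomology produces a connecting homomorphism
$$
\partial_v\colon H^0(F\otimes N)\longrightarrow H^1(N)
$$
which is cup product with $v\in H^1(F^*)=\Ext^1(F,\OO_C)$. Thus $N\in \Theta_{E_v}$ iff $\partial_v=0$, and since cup product is bilinear, $\partial_v$ depends linearly on $v$. To globalize, I would tensor the universal extension on $\PP_F\times C$
$$
0\to \OO_{\PP_F}(1)\boxtimes \OO_C \to \mc{E}\to \OO_{\PP_F}\boxtimes F\to 0
$$
with a Poincar\'e line bundle on $C\times\Pic^0(C)$ and push forward to $\PP_F\times \Pic^0(C)$. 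A determinant-of-cohomology computation then yields a section of $\OO_{\PP_F}(1)\boxtimes \mc L$ whose zero locus is the universal theta family $\{([v],N)\mid N\in\Theta_{E_v}\}$. Restricting to a fibre $[v]\times\Pic^0(C)$ identifies $\Theta_{E_v}$ with the divisor of a section $s_v$ of $\mc L$ that is linear in the homogeneous coordinates of $\PP_F$, defining $\lambda(v)=s_v$.

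\emph{Injectivity.} If $s_{v_1}=\mu s_{v_2}$ for some $\mu\in\C^*$, linearity forces $s_{v_1-\mu v_2}=0$, which means $\Theta_{E_{v_1-\mu v_2}}=\Pic^0(C)$, i.e.\ $E_{v_1-\mu v_2}$ admits no theta divisor. By Proposition \ref{thetadivisor} this forces $v_1=\mu v_2$, whence $[v_1]=[v_2]$ in $\PP_F$. Thus $\lambda$ is injective and $\theta\circ\Phi_F=\PP(\lambda)$ is a linear embedding. The main obstacle will be the determinant-of-cohomology calculation in the linearity step: I must confirm that the line bundle pulled back to $\PP_F$ is \emph{exactly} $\OO_{\PP_F}(1)$, so that the sections $s_v$ are genuinely linear (rather than of higher degree) in $v$. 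This reduces to tracking the tautological twist $\OO_{\PP_F}(1)$, which appears only on the $\OO_C$-subbundle in the universal extension, through the cohomology on $C\times \Pic^0(C)$; any other power would contradict the fact, established via Proposition \ref{thetadivisor}, that the map $v\mapsto s_v$ has trivial kernel on the $(2r-1)$-dimensional space $\Ext^1(F,\OO_C)$ while landing in a subspace of $H^0(\mc L)$ of dimension $\geq 2r-1$.
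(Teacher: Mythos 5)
Your overall strategy coincides with the paper's: both proofs use Proposition \ref{thetadivisor} to make $\theta\circ\Phi_F$ a morphism everywhere on $\PP_F$; both identify the locus $\{[v] : h^0(E_v\otimes\xi)\geq 1\}$, for generic $\xi\in\Pic^0(C)$ with $h^0(\xi)=0$ and $h^0(F\otimes\xi)=1$, as a genuine hyperplane in $\PP_F$ --- your connecting homomorphism $\partial_v=v\cup(-)$ is precisely the map $H^1(\iota_s^*)$ of Lemma \ref{lift} that the paper uses --- and your injectivity argument for $\lambda$ (a nonzero $w$ with $s_w\equiv 0$ would be a nontrivial extension admitting no theta divisor, contradicting Proposition \ref{thetadivisor}) is a clean and correct way to obtain that the map is given by the full linear system, equivalent to the paper's base-point-freeness remark.

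The genuine gap is the one you flag yourself: proving that the universal theta section lives in $\OO_{\PP_F}(1)\boxtimes\mc{L}$ rather than in $\OO_{\PP_F}(k)\boxtimes\mc{L}$ for some $k\geq 2$, i.e.\ that $v\mapsto s_v$ is linear rather than homogeneous of higher degree. Your fallback dimension count does not close it: if the twist were $\OO_{\PP_F}(k)$ with $k\geq 2$, then $v\mapsto s_v$ would be a degree-$k$ homogeneous map, ``trivial kernel'' is no longer a linear-algebra constraint, and since $h^0(r\Theta_M)=r^g=r^2>2r-1$ there is no numerical obstruction to a base-point-free higher-degree system. Likewise, the fiberwise computation only shows that the pullback divisor is supported on the hyperplane $H_\xi$, i.e.\ equals $kH_\xi$ for some $k\geq 1$; it does not give $k=1$. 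To finish you must either actually carry out the determinant-of-cohomology computation (it does work: by additivity along the universal extension, the subbundle $\OO_{\PP_F}(1)\boxtimes\OO_C$ contributes $-\chi(N)=+1$ to the $\PP_F$-degree of the theta line bundle of the family, while the quotient $\OO_{\PP_F}\boxtimes F$ contributes $0$), or argue as the paper does, namely that $\Phi_F^*(D_\xi)$ is a \emph{reduced} effective divisor contained in the irreducible hyperplane $H_\xi$, hence equal to it as a divisor.
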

\begin{proof}
As previously noted, as $F$ is generic we have that
$$\Phi_F \colon \PP_F \to {\Theta}_{r,L}$$ is a birational morphism onto its image and that the composition $\theta \circ \phi_F$ is a morphism by proposition 
\ref{thetadivisor}.
We recall that $\theta$ is defined by the determinat line bundle $\mathcal{L}\in \Pic^0(\SU(r,L))$. For simplicity,  we set $\PP^N = |r\Theta_M|$.
\vspace{2mm}

\noindent In order to prove that, for $F$ general, $\theta \circ \Phi_F$ is a linear embedding,  first of all we will prove that $(\theta \circ \Phi_F)^* (\OO_{\PP^N}(1)) \simeq \OO_{\PP_F}(1)$.
\vspace{2mm}

\noindent For any  $\xi \in \Pic^0(C)$  the locus
$$ D_{\xi} = \overline{\{[E] \in \SU(r,L)^s \colon h^0(E \otimes \xi) \geq 1 \}}$$
is an effective divisor in $\SU(r,L)$  and  $\OO_{\SU(r,L)}(D_\xi) \simeq {\mathcal L}$, see \cite{DN}.   

\vspace{2mm}

\noindent Note that 
\begin{equation}
\label{EQ:Dxi}
(\theta \circ \Phi_F)^* (\OO_{\PP^N}(1)) = \Phi_F^*(\theta^*(\OO_{\PP^N}(1))) = \Phi_F^*({\mathcal L}\vert_{\Theta_{r,L}}) = \Phi_F^*(\OO_{\Theta_{r,L}}(D_{\xi})).
\end{equation}
Moreover, one can verify that for general $E \in \Theta_{r,L}^s$ there exists  an irreducible  reduced  divisor $D_{\xi}$  passing  through $E$  such that  
$E $ is a smooth point of the intersection $D_{\xi} \cap {\Theta_{r,L}}$.  
This implies that for general $F$  the pull back  ${\Phi_F}^*(D_{\xi}) $ is a reduced divisor. 
\vspace{2mm}

\noindent Observe that if $\xi$ is such that if $h^1(F \otimes \xi) \geq 1$ (this happens, for example, if $\xi = 0$), then   any extension $E_v$ of $F$ has sections:
$$ h^0 (E_v \otimes \xi) = h^1(E_v \otimes \xi) \geq 1.$$
In particular this implies that $\Phi_F(\PP_F) \subset D_{\xi}$. On the other hand this does not happen for $\xi$ general and we are also able to be more precise about this. Indeed, let $\xi \in \Pic^0(C)$,  then there exists an effective divisor $d \in C^{(2)}$ such that $\xi = {\omega}_C(-d)$. We have that $h^1(F \otimes \xi) \geq 1$ if and only $d \in Z$, where $Z$ is defined  in Lemma \ref{LEM:EVsecant}. Moreover, we can assume that $Z$ is finite by Proposition \ref{LEM:EVsecant} as $F$ is generic.
From now on we will assume that $d \not\in \vert {\omega}_C \vert$ and $d \not\in Z$. 
We can consider the locus
$$ H_{\xi} = \{ [v] \in \PP_F |\quad  h^0(E_v \otimes \xi) \geq 1 \}.$$
We will prove that $H_{\xi}$ is an hyperplane in $\PP_F$ and $\Phi_F^*(D_{\xi}) = H_{\xi}$.
\vspace{2mm}

\noindent From the exact sequence
$$ 0 \to {\xi} \to E_v \otimes \xi \to F \otimes \xi \to 0,$$
passing to cohomology, since $h^0(\xi) = 0$ we have
$$ 0 \to H^0(E_v \otimes \xi)  \to H^0(F \otimes \xi) \to  \cdots$$
from which we deduce that 
$[v] \in H_{\xi}$ if and only if there exists a non zero global section  of 
$H^0(F \otimes \xi)$ which is in the image  of  $H^0(E_v \otimes \xi)$.
Since $d \not\in Z$, then  $h^0(F \otimes \xi) = 1$, let's denote by $s$ a generator of $H^0(F \otimes \xi)$.
\vspace{2mm}

\noindent {\bf Claim}: if $\xi$ is general, we can assume that the zero locus $Z(s)$ of $s$ is actually empty. This can be seen as follows.
By stability of $F \otimes \xi$ we have that $Z(s)$ has degree at most $1$. Suppose that $Z(s)= x$, with $x\in C$. Then we would have an injective map $\OO_C(x) \hookrightarrow F\otimes \xi$  of vector bundles which gives us ${\xi}^{-1}(x)\in \mathcal{M}_1(F)$. Since $F$ is general, if $r \geq 4$ then $\mathcal{M}_1(F)$ is empty by Proposition \ref{maximal} so the zero locus of $s$ is indeed empty. If $r=3$, then 
$$\mathcal{M}_1(F)=\{T_1,\dots,T_m\}$$
is finite. For each $i\in \{1,\dots, m\}$ consider the locus 
$$T_{F,i}=\{\xi\in \Pic^0(C)\, |\, \exists x\in C\, :\, \xi^{-1}(x)=T_i\}.$$ 
This is a closed subset of $\Pic^0(C)$ of dimension $1$. Indeed, $T_{F,i}$ is the image, under the embedding $\mu_i:C\rightarrow \Pic^{0}(C)$ which send $x$ to $T_i(-x)$. Hence the claim follows by choosing $\xi$ outside the divisor $\bigcup_{i=1}^m T_{F,i}$.  
\vspace{2mm}

\noindent As consequence of the claim, we have that $s$ induces an exact sequence of vector bundles 
$$
\xymatrix{
0 \ar[r]& \OO_C\ar[r]^-{\iota_s} & F\otimes \xi \ar[r] & Q\ar[r] & 0.
}
$$
Observe that $[v]\in H_{\xi}$ if and only if $\iota_s$ can be lifted to a map $\tilde{\iota_s}:\OO_C\rightarrow E\otimes \xi$. Then,
by Lemma \ref{lift}, we have  that $H_{\xi}$ is actually the projectivization of the kernel of the  following map: 
$$H^1(\iota_s^*): H^1(\mHom(F\otimes\xi,\xi))\rightarrow 
H^1(\mHom(\OO_C,\xi))$$
which proves that $H_{\xi}$ is an hyperplane as $H^1(\iota_s^*)$ is surjective and 
$$H^1(\mHom(\OO_C,\xi)) \simeq H^1(\xi)\simeq \C.$$
Note  that we have the inclusion  ${\Phi_F}^*(D_{\xi}) \subseteq H_{\xi}$.  Since 
both are effective divisors and $H_{\xi}$ is irreducible we can conlude that they  have the same support. Finally,   since  ${\Phi_F}^*(D_{\xi})$ is reduced, then they are the same divisor. 
In particular, as claimed, we have 
$$\Phi_F^*(\OO_{\Theta_{r,L}}(D_{\xi}))=\OO_{\PP(F)}(1).$$
In order to conclude we simply need to observe that the map is induced by the full linear system $|\OO_{\PP_F}(1)|$. But this easily follows from the fact that $\theta\circ \Phi_F$ is a morphism. Hence $\theta\circ \Phi_F$ is a linear embedding and the Theorem is proved.
\end{proof}

\begin{remark}
The above Theorem implies that ${\Phi}_L^*({\mathcal L})$ is a unisecant line bundle on the projective bundle $\PP(\mV_L)$. 
\end{remark}

\end{document}